  \documentclass[12pt]{amsart}
\address{Max-Planck-Institut f\"ur Mathematik in den Naturwissenschaften. Inselstraße 22, 04103 Leipzig, Germany.}

  \email{vasirog[at]gmail.com}

\usepackage[utf8]{inputenc}
\usepackage[T2A]{fontenc}
\usepackage{enumerate, amssymb, amsmath, amsthm}
\usepackage{amscd}
\usepackage{graphicx}

\usepackage[colorlinks=true,citecolor=blue, urlcolor=blue, linkcolor=blue]{hyperref}

\usepackage[a4paper,hmargin=2.5cm,vmargin=2.5cm]{geometry}
\usepackage{marginnote}
\usepackage[english]{babel}
\graphicspath{ {images/} }
\usepackage{wrapfig}
\usepackage[matrix,arrow,curve]{xy}\usepackage{mathabx}
\usepackage[OT2, T1]{fontenc}
\usepackage{appendix}
\usepackage{chngcntr}
\usepackage{etoolbox}
\usepackage{lipsum}

\DeclareSymbolFont{cyrletters}{OT2}{wncyr}{m}{n}
\DeclareMathSymbol{\Sha}{\mathalpha}{cyrletters}{"58}

\usepackage[a4paper,hmargin=2.5cm,vmargin=2.5cm]{geometry}
\usepackage[english]{babel}
\graphicspath{ {images/} }
\usepackage{wrapfig}
\usepackage[matrix,arrow,curve]{xy}\usepackage{mathabx}

\begin{document}
\renewcommand{\refname}{Bibliography}
\newtheorem{prop}{Proposition}[section]
\newtheorem{thrm}[prop]{Theorem}
\newtheorem{lemma}[prop]{Lemma}
\newtheorem{cor}[prop]{Corollary}
\newtheorem{mainthm}{Theorem}
\newtheorem{maincor}[mainthm]{Corollary}
\theoremstyle{definition}
\newtheorem{df}{Definition}
\newtheorem{ex}{Example}
\newtheorem{rmk}{Remark}
\newtheorem{conj}{Conjecture}
\newtheorem{cl}{Claim}
\newtheorem{q}{Question}
\newtheorem{constr}{Construction}
\renewcommand{\proofname}{\textnormal{\textbf{Proof:  }}}
\renewcommand{\refname}{Bibliography}
\renewcommand{\themainthm}{\Alph{mainthm}}
\renewcommand{\themaincor}{\Alph{maincor}}
\newtheorem*{prop*}{Proposition}
\newtheorem*{theorem*}{Theorem}

\renewcommand{\phi}{\varphi}
\renewcommand{\epsilon}{\varepsilon}

\renewcommand{\C}{\mathbb C}
\newcommand{\Z}{\mathbb Z}
\newcommand{\Q}{\mathbb Q}
\newcommand{\R}{\mathbb R}
\newcommand{\N}{\mathbb N}
\newcommand{\Fp}{\mathbb{F}_p}
\newcommand{\Fq}{\mathbb{F}_q}

\renewcommand{\O}{\mathcal O}
\newcommand{\g}{\mathfrak g}
\newcommand{\h}{\mathfrak h}
\newcommand{\E}{\mathcal E}
\newcommand{\F}{\mathcal F}
\newcommand{\m}{\mathfrak{m}}

\renewcommand{\i}{\sqrt{-1}}
\renewcommand{\o}{\otimes}
\newcommand{\di}{\partial}
\newcommand{\acts}{\lefttorightarrow}
\newcommand{\dibar}{\overline{\partial}}
\newcommand{\im}{\operatorname{im}}
\renewcommand{\ker}{\operatorname{ker}}
\newcommand{\Hom}{\operatorname{Hom}}
\newcommand{\tr}{\operatorname{tr}}
\newcommand{\codim}{\operatorname{codim}}
\newcommand{\rk}{\operatorname{rk}}
\newcommand{\nilp}{\operatorname{nilp}}
\newcommand{\hdot}{{\:\raisebox{3pt}{\text{\circle*{1.5}}}}}
\newcommand{\Supp}{\operatorname{Supp}}
\newcommand{\Alb}{\operatorname{Alb}}
\newcommand{\alb}{\operatorname{alb}}
\newcommand{\Hilb}{\operatorname{Hilb}}
\newcommand{\Sh}{\operatorname{Sh}}
\newcommand{\sh}{\operatorname{sh}}
\newcommand{\CP}{\mathbb{C}\mathbf{P}}
\newcommand{\Isom}{\operatorname{Isom}}
\newcommand{\Sym}{\operatorname{Sym}}
\newcommand{\Stab}{\operatorname{Stab}}
\newcommand{\Aut}{\operatorname{Aut}}
\newcommand{\sslash}{\mathbin{/\mkern-6mu/}}
\newcommand{\Pic}{\operatorname{Pic}}
\newcommand{\V}{\mathbb{V}}
\newcommand{\alg}{\operatorname{alg}}
\newcommand{\Ext}{\operatorname{Ext}}
\newcommand{\MHS}{\operatorname{MHS}}
\newcommand{\HS}{\operatorname{HS}}
\newcommand{\Gr}{\operatorname{Gr}}
\newcommand{\End}{\operatorname{End}}
\newcommand{\odef}{\operatorname{def}}
\newcommand{\ad}{\operatorname{ad}}
\newcommand{\Ad}{\operatorname{Ad}}
\newcommand{\cd}{\operatorname{cd}}
\newcommand{\an}{\operatorname{an}}
\newcommand{\orb}{\operatorname{orb}}
\newcommand{\Trop}{\operatorname{Trop}}
\newcommand{\Hdg}{\operatorname{Hdg}}

\newcommand{\GL}{\operatorname{GL}}
\newcommand{\SL}{\operatorname{SL}}
\newcommand{\SU}{\operatorname{SU}}
\renewcommand{\U}{\operatorname{U}}
\newcommand{\SO}{\operatorname{SO}}
\newcommand{\Ogr}{\operatorname{O}}
\newcommand{\Sp}{\operatorname{Sp}}

\newcommand{\gl}{\mathfrak{gl}}
\renewcommand{\sl}{\mathfrak{sl}}
\newcommand{\su}{\mathfrak{su}}
\renewcommand{\u}{\mathfrak{u}}
\newcommand{\so}{\mathfrak{so}}
\renewcommand{\sp}{\mathfrak{sp}}
\renewcommand{\g}{\mathfrak{g}}
\renewcommand{\h}{\mathfrak{h}}
\newcommand{\z}{\mathfrak{z}}

\newcommand{\G}{\mathcal{G}}

\title[Topology of higher Albanese maps]{Topology of higher Albanese maps and aspherical varieties with nilpotent fundamental group}

\author{Vasily Rogov}

\begin{abstract}Let $X$ be a normal complex algebraic variety. Let 
$\mathcal{G}^s_{\mathbb{Z}}(X)$ be the maximal torsion free nilpotent quotient of $\pi_1(X^{\an})$ of nilpotency class at most $s$. Let $F^{\bullet}\mathfrak{g}^s$ be the Morgan--Hain
Hodge filtration on the Lie algebra of the $s$-th lower central quotient of the complex Maltsev completion
of $\pi_1(X)$. We show that the natural map
$H^k(\mathcal{G}^s_{\mathbb{Z}}(X), \mathbb{Z}) \to H^k(X, \mathbb{Z})$
vanishes for $k > \dim F^1\mathfrak{g}^s$. If $\G^s_{\Z}(X)$ is of nilpotency class greater than two, this includes
the top nonvanishing degree of $H^{\bullet}(\mathcal{G}^s_{\mathbb{Z}}(X),
\mathbb{Z})$. We deduce that if the fundamental group of an aspherical
normal variety is virtually nilpotent, it is virtually two-step
nilpotent, confirming a classical conjecture in this case. This gives a positive answer to a question of Aguilar and Campana in this case of aspherical varieties. The ingredients of the proof are the $q$-convexity of higher
Albanese manifolds and the definability of higher Albanese maps.
\end{abstract}

\maketitle


\section{Introduction}

A famous paper \cite{DGMS} proves that compact K\"ahler manifolds are cohomologically formal.

Simplest examples of non-formal topological spaces are provided by nilmanifolds, which are classifying spaces for finitely presented torsion-free nilpotent groups: the only case when they are formal is when the corresponding nilpotent group is abelian \cite{Has}. This leaves one wondering whether there exist compact K\"ahler manifolds with nilpotent, non-abelian fundamental group. If $X$ is such a manifold, it needs to have sufficiently complicated higher homotopy groups to kill the Massey products coming from the map $X \to K(\pi_1(X),1)$. In particular, $X$ cannot be aspherical.

It turns out that there exist compact K\"ahler manifolds whose fundamental group is torsion-free, non-abelian, $2$-step nilpotent \cite{SvdV, Camp} (see also \cite{GM} for potential new examples).
Passing to quasi-projective varieties, one finds more examples of varieties with torsion-free nilpotent non-abelian fundamental groups (\cite[Example 4.25]{CDY}, \cite{Coll}). In all these examples, the fundamental group is still at most $2$-step nilpotent. This motivates the following conjecture, formulated by Campana in \cite{Camp} for the compact K\"ahler and smooth projective cases, and asked as a question in \cite{AC} in general.

\begin{conj}\label{at most two conj}
Let $X$ be a normal complex algebraic variety. Suppose that $\pi_1(X)$ is virtually nilpotent. Then it is virtually at most $2$-step nilpotent.
\end{conj}

This conjecture is known to hold in several special cases \cite{Camp, Rog, Shim1} including major recent progress establishing it for weakly special varieties \cite{CDHP}. As we explain above, if $X$ is aspherical and smooth projective, the only way $\pi_1(X)$ can be nilpotent is if it is abelian, so Conjecture \ref{at most two conj} is automatic in this setting. However, for aspherical quasi-projective varieties the same argument breaks down, since quasi-projective varieties may fail to be formal. Moreover, \cite[Example 4.25]{CDY} provides an example of an aspherical quasi-projective variety with nilpotent non-abelian fundamental group; cf. \cite[Example 13.1]{Suc}.

In this paper we verify Conjecture \ref{at most two conj} for aspherical varieties, without any projectivity assumption.

\begin{prop*}[Corollary \ref{aspherical cor}]
Let $X$ be an aspherical normal algebraic variety. Suppose that $\pi_1(X)$ is virtually nilpotent. Then it is virtually $2$-step nilpotent.
\end{prop*}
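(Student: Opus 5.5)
Reduce to the case where $\pi_1(X)$ is torsion-free nilpotent, then apply the main vanishing theorem from the abstract in the top degree, where asphericity forces the map to be an isomorphism.

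\textbf{Reduction.} Let $\Gamma=\pi_1(X)$ be virtually nilpotent. Since $X$ is aspherical, $X^{\an}$ is a finite-dimensional $K(\Gamma,1)$. Hence $\Gamma$ has finite cohomological dimension and is torsion-free. Pass to a finite-index nilpotent subgroup $\Gamma'\subset\Gamma$. It corresponds to a finite étale cover $X'\to X$. The variety $X'$ is again normal, algebraic and aspherical, with $\pi_1(X')=\Gamma'$ finitely generated, torsion-free and nilpotent, say of class $s$. It therefore suffices to prove: if $X$ is aspherical with $\Gamma=\pi_1(X)$ torsion-free nilpotent of class $s$, then $s\le 2$.

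\textbf{Main argument.} Suppose $s\ge 3$. Since $\Gamma$ is torsion-free of class $s$, we have $\G^s_{\Z}(X)=\Gamma$. By asphericity, the natural map $H^k(\G^s_{\Z}(X),\Z)\to H^k(X,\Z)$ is an isomorphism in every degree. Let $n$ be the Hirsch length of $\Gamma$. The group $\Gamma$ is a lattice in the simply connected nilpotent Lie group of the Maltsev completion, so $K(\Gamma,1)$ is a compact orientable nilmanifold of dimension $n$. Thus $H^n(\Gamma,\Z)\cong\Z\ne0$, and $n$ is the top nonvanishing degree. The main theorem asserts that, when the class exceeds two, the map vanishes in this top degree. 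This contradicts injectivity, since $H^n(\Gamma,\Z)\cong H^n(X,\Z)$ is nonzero. Hence $s\le 2$.

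\textbf{Main obstacle.} The step needing care is the claim that the top degree $n=\dim\g^s$ satisfies $n>\dim F^1\g^s$ whenever the class exceeds two, which is what places degree $n$ in the range of the vanishing theorem. I would argue this from the structure of the Morgan--Hain mixed Hodge structure. The weight $-1$ part $\g/[\g,\g]$ has Hodge types $(-1,0)$ and $(0,-1)$, and $F^1$ kills these. On the graded pieces of weight $\le -2$, $F^1$ is bounded via the Hodge decomposition $F^p\oplus\overline{F^{q}}$ with $p+q$ the weight plus one. Concretely, $F^1\g^s$ should be a proper subspace in a way forcing $\dim F^1\g^s<n$, so the theorem applies at $k=n$. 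This is exactly the hypothesis "class greater than two" in the abstract's statement, so I would invoke that part of the main theorem directly rather than reprove it.

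One more detail must be checked: the correct passage to a finite-index subgroup, which must preserve normality and asphericity. This holds because finite étale covers of normal varieties are normal, and covers of aspherical spaces are aspherical.
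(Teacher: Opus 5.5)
Your proof is correct and is essentially the paper's: you pass to a finite \'etale cover $X'$ with $\pi_1(X')=\G^s_{\Z}(X')$ torsion-free nilpotent of class $s$, use asphericity to make $\alpha^{\bullet}$ an isomorphism, and for $s>2$ contradict item (\ref{second item true}) of Theorem~\ref{main thrm}, whose proof indeed works in the top degree $r=\rk\G^s_{\Z}(X')$ where $H^r=\Z$. Your ``main obstacle'' heuristic is off, though, but the argument stands because you invoke the theorem rather than this sketch: if $F^1$ killed all of weight $-1$, then $\dim F^1\g^s<n$ would hold without the class hypothesis, so the vanishing theorem would force $\alpha^{n}=0$ even for abelian varieties; the relevant piece is the horizontal one, written $F^1\g^s\supseteq F^0\g^s$ in the paper (i.e.\ $F^{-1}$ in the usual decreasing indexing), and its properness genuinely needs class $>2$, e.g.\ via non-dominance of $\alb^s$ from \cite[Theorem B]{Rog}, or because the nonzero $[\g^s,[\g^s,\g^s]]$ lies in $W_{-3}\g^s$, which forces a Hodge type $(p,q)$ with $p\le -2$.
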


This result is a special case of a more general theorem, for which we first fix some notation.
Let $\Gamma$ be a finitely presented group with lower central series $\ldots \subseteq \Gamma_s \subseteq \ldots \subseteq \Gamma_1 \subseteq \Gamma$, so that $\Gamma^s:=\Gamma/\Gamma_s$ is a finitely generated nilpotent group. It admits a torsion-free nilpotent quotient $\Gamma^s \to \mathcal{G}^s_{\mathbb{Z}}$ with finite kernel (the notation $\mathcal{G}^s_{\mathbb{Z}}$ is motivated in Subsection~\ref{nilpotent basic subsec}), which induces a natural map on cohomology $H^{\bullet}(\mathcal{G}^s_{\mathbb{Z}}, \mathbb{Z}) \to H^{\bullet}(\Gamma, \mathbb{Z})$.

Now let $\Gamma = \pi_1(X)$ for a topological space $X$, and write $\mathcal{G}^s_{\mathbb{Z}}(X) := \mathcal{G}^s_{\mathbb{Z}}$. Composing with $H^{\bullet}(\pi_1(X), \mathbb{Z}) \to H^{\bullet}(X, \mathbb{Z})$ gives a map
\[
\alpha^{\bullet} \colon H^{\bullet}(\mathcal{G}^s_{\mathbb{Z}}(X), \mathbb{Z}) \to H^{\bullet}(X, \mathbb{Z}).
\]
When $X$ is a normal algebraic variety, $\mathcal{G}^s_{\mathbb{Z}}(X)$ is realized as a Zariski-dense discrete subgroup of a nilpotent complex Lie group $\mathcal{G}^s_{\mathbb{C}}(X)$, whose Lie algebra carries a canonical Hodge filtration $F^{\bullet}\mathfrak{g}^s(X)$ (see Subsection \ref{higher albanese subsec} for more details).

\begin{theorem*}[Theorem \ref{main thrm}]
Let $X$ be a normal algebraic variety and $s>0$. Then:
\begin{enumerate}
\item\label{first item intro} the map $\alpha^k \colon H^k(\mathcal{G}^s_{\mathbb{Z}}(X), \mathbb{Z}) \to H^k(X, \mathbb{Z})$ vanishes for $k>\dim F^1\mathfrak{g}^s$;
\item\label{second item intro} either $\mathcal{G}^s_{\mathbb{Z}}(X)$ has nilpotency class at most $2$, or there exists $k$ such that $\alpha^k=0$ and $H^k(\mathcal{G}^s_{\mathbb{Z}}(X), \mathbb{Z}) \neq 0$.
\end{enumerate}
\end{theorem*}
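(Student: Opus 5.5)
The plan is to factor $\alpha^\bullet$ through the cohomology of an open subset of a Stein-like manifold whose cohomology vanishes in high degrees. Consider the higher Albanese manifold
\[
\Alb^s(X) := \mathcal{G}^s_{\mathbb{Z}}(X)\backslash \mathcal{G}^s_{\mathbb{C}}(X)/F^0\mathcal{G}^s_{\mathbb{C}}(X).
\]
By construction it is a $K(\mathcal{G}^s_{\mathbb{Z}}(X),1)$, being a quotient of a contractible homogeneous space $\mathcal{G}^s_{\mathbb{C}}/F^0$ (nilpotent, hence the quotient is diffeomorphic to $\mathbb{C}^N$) by a discrete torsion-free group acting freely. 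Then consider the higher Albanese map $a_s\colon X \to \Alb^s(X)$, which is holomorphic and induces on $\pi_1$ the quotient $\pi_1(X)\to \mathcal{G}^s_{\mathbb{Z}}(X)$. Under this identification, $\alpha^\bullet = a_s^*$ on integral cohomology.

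The main geometric input is $q$-convexity. The fibres of $\mathcal{G}^s_{\mathbb{C}} \to \mathcal{G}^s_{\mathbb{C}}/F^0$ are modelled on $F^0$, so $\dim_{\mathbb{C}} \Alb^s(X) = \dim \mathfrak{g}^s/F^0\mathfrak{g}^s$. In the weight convention where $\mathfrak g^s$ has negative weights, the relevant count is instead $\dim F^{1}$ of the dual, i.e.\ the dimension the statement calls $\dim F^1\mathfrak g^s$. I would show that $\Alb^s(X)$ carries an exhaustion function whose Levi form has enough positive eigenvalues, fibering over the abelian Albanese torus-like quotient. Concretely, I would do this by induction along the central series $\mathcal{G}^s \to \mathcal{G}^{s-1}$, with fibres $(\mathbb{C}^*)^a\times$ compact tori. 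The aim is to obtain a $q$-complete (or $q$-convex) structure with $q$ controlled by the compact directions. Andreotti--Grauert type vanishing then gives that $H^k$ of $\Alb^s(X)$, or of any relatively compact-like neighbourhood of the image, vanishes for $k>\dim F^1\mathfrak g^s$.

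Since $a_s(X)$ is not closed in general, I would instead use definability. The map $a_s$ is definable in an o-minimal structure (by the earlier definability results for higher Albanese maps), so its image is a definable analytic subset of real dimension bounded by $2\dim X$. More importantly, the map $a_s$ factors up to homotopy through a definable open neighbourhood $U$ of the image on which the $q$-convex exhaustion restricts nicely, so that $a_s^*$ factors as
\[
H^k(\Alb^s)\to H^k(U)\to H^k(X),
\]
with $H^k(U)=0$ for $k>\dim F^1\mathfrak g^s$. Alternatively, the image is a $q$-complete analytic space, which has the homotopy type of a CW complex of dimension at most $\dim + q$. This proves part (1). I expect this step to be the main obstacle: making precise that the image, or a neighbourhood of it, has homological dimension bounded by $\dim F^1\mathfrak g^s$ rather than by the full real dimension.

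For part (2), $H^{\bullet}(\mathcal{G}^s_{\mathbb{Z}},\mathbb{Z})$ is nonzero in its top degree $n=\rk \mathcal{G}^s_{\mathbb{Z}} = \dim_{\mathbb{C}}\mathfrak g^s$, since a nilmanifold is a closed orientable manifold. So it suffices to show that $\dim F^1\mathfrak g^s< \dim\mathfrak g^s$ when the class exceeds $2$. The Hodge filtration satisfies $F^0\mathfrak g^s$ nontrivial whenever the weight $\le -2$ part is nonzero. Using the mixed Hodge structure on $\mathfrak g^s$ with $\Gr^W_{-1}$ pure of weight $-1$ and brackets strictly compatible, I would argue that in class $\ge 3$ some graded piece $\Gr^W_{-k}$ with $k\ge 3$ has a nonzero $F^{\lceil\cdot\rceil}$ part forcing the inequality. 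Then $\alpha^n=0$ while $H^n\neq 0$.
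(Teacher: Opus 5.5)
Your overall architecture ($q$-completeness of $\operatorname{Alb}^s(X)$, passing to the image, Hamm's theorem) matches the paper's. However, part (1) has a genuine gap: nothing in your argument produces the number $\dim F^1\mathfrak{g}^s$.

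First, you misidentify this number. $\dim F^1$ of the dual of $\mathfrak{g}^s$ equals $\dim\mathfrak{g}^s-\dim F^0\mathfrak{g}^s=\dim_{\mathbb{C}}\operatorname{Alb}^s(X)$. With that reading, (1) is false: for $s=1$ and $X$ an abelian variety of dimension $g$, $\alpha^{2g}\neq 0$ although $2g>g$. In the paper, $F^1\mathfrak{g}^s$ contains $F^0\mathfrak{g}^s$ (it is the step just above $F^0\mathfrak{g}^s$, i.e.\ $F^{-1}$ in the usual decreasing indexing). The quotient $F^1\mathfrak{g}^s/F^0\mathfrak{g}^s$ is the fibre of the horizontal subbundle $\mathcal{F}\subseteq T\operatorname{Alb}^s(X)$.

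The missing idea is Griffiths transversality. $\operatorname{alb}^s$ is tangent to $\mathcal{F}$, because the period map of a suitable admissible variation of mixed Hodge structure factors through $\operatorname{alb}^s$ via a map finite onto its image that sends $\mathcal{F}$ to the horizontal distribution. Hence $\dim\operatorname{alb}^s(X)\le\dim F^1\mathfrak{g}^s-\dim F^0\mathfrak{g}^s$. The only bound you place on the image, $\dim X$, is unrelated to the Hodge filtration.

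With this in hand, the argument closes as follows:
\begin{enumerate}
\item $Y=\overline{\operatorname{alb}^s(X)}$ is a closed analytic subvariety with $\dim Y=\dim\operatorname{alb}^s(X)$, by definability plus the Peterzil--Starchenko definable Remmert--Stein theorem. This is what replaces your unspecified neighbourhood $U$.
\item $\operatorname{Alb}^s(X)$ is $q$-complete for $q=\dim F^0\mathfrak{g}^s$. This needs the explicit computation that $\Lambda\backslash V_{\mathbb{C}}/F$ is $\dim F$-complete, together with Takeuchi's theorem on principal bundles. A bare ``induction along the central series'' is not enough, since $q$-completeness is not additive in general fibre bundles (cf.\ Skoda).
\item Hence $Y$ is $q$-complete, and Hamm's theorem bounds its homotopy dimension by $\dim Y+q\le\dim F^1\mathfrak{g}^s$.
\end{enumerate}
Note also that Andreotti--Grauert vanishing concerns coherent cohomology, not $H^k(-,\mathbb{Z})$. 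Moreover, the vanishing must happen on $Y$, not on $\operatorname{Alb}^s(X)$, whose $H^r$ is $\mathbb{Z}$ for $r=\operatorname{rk}\mathcal{G}^s_{\mathbb{Z}}(X)$.

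For (2), reducing to $\dim F^1\mathfrak{g}^s<\dim\mathfrak{g}^s$ is legitimate once (1) is established. It also differs from the paper's main argument, which shows $\dim\operatorname{alb}^s(X)<\dim\operatorname{Alb}^s(X)$ unless $\operatorname{alb}^s$ is dominant, and then uses Theorem B of \cite{Rog} (dominance forces class at most $2$). Your route is the one indicated in the paper's footnote and avoids that external input.

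However, your justification aims at $F^0\mathfrak{g}^s\neq 0$, consistent with the misreading above. The claim that $F^0\mathfrak{g}^s\neq0$ whenever there is weight $\le -2$ is false: if $H_1(X)$ is of Tate type (e.g.\ a hyperplane arrangement complement), $\mathfrak{g}^s$ consists entirely of types $(-k,-k)$ and $F^0\mathfrak{g}^s=0$. The correct argument runs as follows. Class $\ge 3$ gives $0\neq[\mathfrak{g}^s,[\mathfrak{g}^s,\mathfrak{g}^s]]\subseteq W_{-3}\mathfrak{g}^s$. On $\operatorname{Gr}^W_{-k}$ with $k\ge 3$, every Hodge type $(a,b)$ with $a+b=-k$ has $a\le -2$ or $b\le -2$. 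By Hodge symmetry, some nonzero component then has $a\le -2$, so it does not lie in $F^1\mathfrak{g}^s$. Since $\dim F^p$ is additive over $\operatorname{Gr}^W$, this yields $\dim F^1\mathfrak{g}^s<\dim\mathfrak{g}^s$. Hence $\alpha^r=0$ while $H^r(\mathcal{G}^s_{\mathbb{Z}}(X),\mathbb{Z})=\mathbb{Z}$.
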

In a way, item (\ref{first item intro}) establishes a new restriction on algebraic topology of normal complex algebraic varieties, and item (\ref{second item intro}) guarantees that this restriction is indeed non-empty in many cases. The condition that $\mathcal{G}^s_{\mathbb{Z}}(X)$ has nilpotency class at most $2$ appearing in item (\ref{second item intro}) means one of two things: either $s\in\{1,2\}$, or $s>2$ and the lower central series of $\pi_1(X)$ rationally stabilize at the second step. In particular, if $\pi_1(X)$ is nilpotent and torsion-free, this means it is at most $2$-step nilpotent.

This paper is a logical continuation of our earlier work \cite{Rog}, although the reader does not need to be familiar with it. The definability of higher Albanese maps in some o-minimal structure plays a crucial role in the proof, but is entirely black-boxed (see Proposition~\ref{closure def prop}); no prior exposure to the underlying notions is needed.

The scheme of proof is very simple. We show that the $s$-th higher Albanese manifold $\Alb^s(X)$ is holomorphically $q$-complete for $q=\dim F^0\mathfrak{g}^s$. This property is inherited by $Y$, the closure of the higher Albanese image. Definability of higher Albanese maps guarantees that $\dim Y$ equals the dimension of the image itself, which, by Griffiths transversality, is at most $\dim F^1\mathfrak{g}^s-\dim F^0\mathfrak{g}^s$. Using Hamm's theorem on the homotopy type of $q$-complete spaces, we then bound the cohomological dimension of $Y$.

\textbf{Acknowledgements.} I am thankful to MPI MiS, Leipzig, for great working conditions. 
\\

\textbf{AI Usage Disclosure.} Claude Sonnet 5 (Anthropic) was used to improve style, grammar, and organization, and to assist in developing several motivating examples; the author takes full responsibility for the content of the paper.

\section{Higher Albanese manifolds}
\subsection{Nilpotent quotients}\label{nilpotent basic subsec} 
We cover some basic material on nilpotent groups, lower central series and Maltsev completions. Basic references for this are \cite{Mal, Quil}, see also surveys in \cite[Appendix A]{ABCKT} and \cite[Section 2]{Merk}.

Let $\Gamma$ be finitely presented group. Its lower central series is the tower of subgroups $\Gamma_0=\Gamma, \ \Gamma_j:=[\Gamma_{j-1}, \Gamma]$. Each  $\Gamma_j$ is normal both in $\Gamma_{j-1}$ and in $\Gamma$ and the quotients $\Gamma_{j-1}/\Gamma_j$ are abelian.

A group $\Delta$ is called nilpotent if $\Delta_s=\{e\}$ for some $s>0$. The number 
\[
\nilp(\Delta):=\min\{s \ | \Delta_s = \{e\}\}
\]
is called the \emph{nilpotency class} of $\Gamma$.

The \emph{rank} (or \emph{Hirsch length}) of a finitely presented nilpotent group $\Gamma$ is defined as
\[
\rk \Delta:=\sum \rk(\Delta_{j-1}/\Delta_j).
\]

If $\Delta$ is nilpotent and finitely presented, then its torsion is a finite normal subgroup and it admits a torsion free finite index subgroup.

Subgroups and quotients of nilpotent groups are nilpotent of non-higher nilpotency class.

Now let $\Gamma$ be an arbitrary finitely presented group. Set $\Gamma^s:=\Gamma/\Gamma_s$. For $s=1$ this is precisely the abelianisation $\Gamma^1=\Gamma/[\Gamma, \Gamma]$. In general, this is a nilpotent group of nilpotency at most $s$. In fact, exactly one of the following holds:
\begin{itemize}
\item $\nilp(\Gamma^s)=s$;
\item the lower central series stabilise at some step $s'< s$, i.e. $\Gamma_{s'}=\Gamma_{s'+1}=\ldots =\Gamma_s$. In this case $\Gamma^s\simeq \Gamma^{s'}$ and $\nilp(\Gamma^s)=s'$.
\end{itemize}

In both cases, it is true that for any $s$-step nilpotent group $\Delta$ a homomorphism $\phi \colon \Gamma \to \Delta$ uniquely factors through $\Gamma^s$.

If $\Gamma$ is abelian group, one can construct its universal representation over a field $\Bbbk$ via mapping $\Gamma \to \Gamma \o_{\Z} \Bbbk$. It turns out, that similar construction exists for arbitrary finitely generated nilpotent groups. Namely, the following result is due to Maltsev for $\Bbbk=\R$ \cite{Mal} and in full generality probably belongs to Quillen \cite{Quil}.

\begin{thrm}[Maltsev, Quillen]
Let $\Delta$ be finitely generated nilpotent group and $\Bbbk$ a field of characteristic zero. There exists a connected simply connected unipotent algebraic group over $\Bbbk$, denoted by $\G_{\Bbbk}(\Delta)$, and a representation with Zariski dense image $\mu_{\Delta, \Bbbk} \colon \Gamma \to \G_{\Bbbk}(\Delta)(\Bbbk)$ such that the following holds: for any algebraic group $\mathbf{G}$ over $\Bbbk$ and a  representation $\rho \colon \Delta \to \mathbf{G}(\Bbbk)$ there exists unique factorisation
\[
\xymatrix{
\Delta \ar[rr]^{\rho} \ar[rd]_{\mu_{\Gamma, \Bbbk}} && \mathbf{G}(\Bbbk)\\
&\G_{\Bbbk}(\Delta)(\Bbbk) \ar[ru]_{\rho'}&
}
\]
where $\rho'$ is induced by a $\Bbbk$-morphism of algebraic groups $\G_{\Bbbk}(\Delta) \to \mathbf{G}$. 
\end{thrm}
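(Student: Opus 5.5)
The plan is to prove the Maltsev–Quillen theorem by constructing $\G_{\Bbbk}(\Delta)$ explicitly. We first treat torsion-free $\Delta$, build the group by induction on the nilpotency class, and then check the universal property separately. The finite torsion subgroup is dealt with at the end. Since any unipotent group over $\Bbbk$ is uniquely divisible, every torsion element of $\Delta$ must map to the identity in $\G_{\Bbbk}(\Delta)(\Bbbk)$. So for the unipotent part we may replace $\Delta$ by $\Delta/\mathrm{tors}$.

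\textbf{Step 1 (construction).} For torsion-free $\Delta$ we take a Maltsev basis $\gamma_1,\dots,\gamma_n$ adapted to a central series with cyclic factors, where $n=\rk\Delta$. Every element is then uniquely $\gamma_1^{a_1}\cdots\gamma_n^{a_n}$ with $a_i\in\Z$. By Hall's theorem, multiplication and inversion in these coordinates are given by polynomials with rational coefficients. We extend the polynomial group law from $\Z^n$ to $\Bbbk^n$. The group axioms continue to hold because they are polynomial identities valid on the Zariski-dense set $\Z^n$. The result is a unipotent algebraic group: it is an iterated central extension of copies of $\mathbb{G}_a$, hence connected and simply connected. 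It contains $\Delta$ as a Zariski-dense subgroup, again because $\Z^n$ is dense in $\mathbb{A}^n$.

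An equivalent route, which is more canonical, is Quillen's. We complete the group algebra $\Bbbk[\Delta]$ at its augmentation ideal $I$, take the Lie algebra $\mathfrak{n}$ of primitive elements, and set $\G_{\Bbbk}(\Delta)=\exp\mathfrak{n}$ via the Baker–Campbell–Hausdorff formula. For finitely generated nilpotent $\Delta$, the algebra $\mathfrak{n}$ is finite-dimensional nilpotent, because the $I$-adic filtration is controlled by the lower central series.

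\textbf{Step 2 (universal property).} Let $\rho\colon\Delta\to\mathbf G(\Bbbk)$ be given. Uniqueness of the factorisation is immediate from Zariski density of $\mu(\Delta)$. For existence, we first observe that the Zariski closure $H$ of $\rho(\Delta)$ need not be unipotent: a generator of $\Z$ may map to a semisimple element. So the statement as written must be read with $\mathbf G$ unipotent, and I would prove it in that form. This is how it is used later, where the targets are nilpotent Lie groups and Maltsev completions.

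For unipotent $\mathbf G$, the exponential map $\exp\colon\mathrm{Lie}(\mathbf G)\to\mathbf G$ is an isomorphism of varieties, and $\mathbf G(\Bbbk)$ is uniquely divisible. We define $\rho'$ on the coordinates $\gamma_1^{a_1}\cdots\gamma_n^{a_n}$ by $\rho(\gamma_1)^{a_1}\cdots\rho(\gamma_n)^{a_n}$, where the real powers are $\exp(a_i\log\rho(\gamma_i))$. This map is polynomial in the $a_i$. It is a homomorphism on $\Z^n$, and homomorphy is a polynomial identity, so by density it is a homomorphism on all of $\Bbbk^n$.

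In Quillen's language the same argument reads as follows. The representation $\rho$ extends to $\Bbbk[\Delta]\to\Bbbk[\mathbf G]^\vee$, which is continuous for the $I$-adic topology since $\rho(\gamma)-1$ is nilpotent. It therefore induces a Lie algebra map $\mathfrak n\to\mathrm{Lie}(\mathbf G)$, and exponentiating gives $\rho'$.

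\textbf{Main obstacle.} The hard step is proving that the coordinate group law is polynomial, i.e.\ Hall's collection process, or equivalently that $\mathfrak n$ is finite-dimensional. I would handle this by induction on the nilpotency class. Consider the central extension $1\to\Delta_{s-1}\to\Delta\to\Delta/\Delta_{s-1}\to1$. Its class is a $2$-cocycle on $\Delta/\Delta_{s-1}$ with values in the free abelian group $\Delta_{s-1}$ (torsion-free case). By induction, $\Delta/\Delta_{s-1}$ embeds densely in a unipotent group $U$. The cocycle then extends to a polynomial cocycle on $U$, using the comparison $H^2(\Delta/\Delta_{s-1},\Bbbk)\cong H^2_{\mathrm{alg}}(U,\Bbbk)\cong H^2(\mathrm{Lie}\,U,\Bbbk)$ (Nomizu-type theorem). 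I expect this cohomology comparison to be the real technical core of the proof.
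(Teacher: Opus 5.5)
The paper does not prove this statement; it quotes it from Maltsev and Quillen. Your Steps 1--2 are the classical argument from those sources: Maltsev coordinates plus Hall's polynomiality theorem, with Quillen's completed group algebra as the canonical variant. Taking Hall's theorem as input, the argument is correct:
\begin{itemize}
\item density of $\Z^n$ in $\mathbb{A}^n_{\Bbbk}$ gives both the group axioms on $\Bbbk^n$ and the homomorphism property of $\rho'$;
\item uniqueness follows from Zariski density;
\item the reduction to $\Delta/\mathrm{tors}$ via torsion-freeness of unipotent groups is right.
\end{itemize}
You are also right that the statement is false as written for non-unipotent targets. The map $\rho\colon \Z\to\mathbb{G}_m(\Bbbk)$, $1\mapsto 2$, cannot factor through $\G_{\Bbbk}(\Z)=\mathbb{G}_a$, because every morphism $\mathbb{A}^1\to\mathbb{G}_m$ is constant. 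Likewise $\Z/2\to\mathbb{G}_m(\Bbbk)$, $1\mapsto -1$, cannot factor through the trivial group $\G_{\Bbbk}(\Z/2)$. The unipotent reading is the one the paper actually uses; it speaks of the ``universal $s$-step unipotent representation''. Also, $\Gamma$ in the statement should read $\Delta$.

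The one thing to repair is your fallback proof of Hall's theorem. Induction along the lower central series leaves the torsion-free world. Take $\Delta=\langle x,y,z\mid [x,y]=z^2,\ z\ \text{central}\rangle$. It is torsion-free: it is the subgroup of the real Heisenberg group generated by $\exp X$, $\exp Y$ and $\exp(\tfrac12[X,Y])$. Yet $\Delta/\Delta_1\cong\Z^2\times\Z/2$ embeds in no unipotent group, so your inductive hypothesis cannot be applied to $\Delta/\Delta_{s-1}$. Induct instead on the upper central series. For torsion-free nilpotent $\Delta$, the centre $Z(\Delta)$ is free abelian and $\Delta/Z(\Delta)$ is torsion-free, by uniqueness of roots.

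Note also what the $H^2$ comparison actually gives. It produces a polynomial cocycle $c'$ with $c'|_{Q}=c+\delta f$ for some function $f\colon Q\to\Bbbk^m$. So $\Delta$ embeds in the algebraic extension via $(q,z)\mapsto(q,z-f(q))$, not in the original coordinates. Step 2 needs polynomiality in Maltsev coordinates, so you must add the following standard fact. Let $U$ be a unipotent group filtered by normal subgroups with successive quotients $\mathbb{G}_a$, and choose $\gamma_1,\dots,\gamma_n$ adapted to this filtration. Then the coordinates of the second kind $(a_1,\dots,a_n)\mapsto\gamma_1^{a_1}\cdots\gamma_n^{a_n}$ give an isomorphism of varieties $\mathbb{A}^n\to U$.
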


For a finitely presented group $\Gamma$ we can denote $\G^s_{\Bbbk}(\Gamma):=\G_{\Bbbk}(\Gamma^s)$ and  $\mu^s_{\Bbbk}:=\mu_{\Gamma^s, \Bbbk} \circ \pi^s$ where $\pi^s \colon \Gamma \to \Gamma^s$ is the natural projection. Then $\mu^s_{\Bbbk} \colon \Gamma \to \G_{\Bbbk}(\Gamma)(\Bbbk)$ is the universal $s$-step unipotent representation of $\Gamma$ over $\Bbbk$.

The central series $\ldots \Gamma_s \unlhd \ldots \Gamma_2 \unlhd \Gamma_1 \unlhd \Gamma$ leads to a tower of central group extensions
\[
1 \xleftarrow{} \Gamma^1 \xleftarrow{} \Gamma^2 \xleftarrow{} \ldots \xleftarrow{}\Gamma^s \xleftarrow{} \ldots
\]
that maps to a tower of central $\Bbbk$-algebraic group extensions
\begin{equation}\label{Maltsev tower}
1 \xleftarrow{} \G^1_{\Bbbk}(\Gamma) \xleftarrow{} \G^2_{\Bbbk}(\Gamma) \xleftarrow{} \ldots \xleftarrow{} \G^s_{\Bbbk}(\Gamma) \xleftarrow{} \ldots
\end{equation}
The map $\G^s_{\Bbbk}(\Gamma) \to \G^{s-1}_{\Bbbk}(\Gamma)$ is a central extension via the additive group of the vector space $Z^s \o \Bbbk$, where 
\[
Z^s:=\Gamma_{s-1}/\Gamma_s=\ker(\Gamma^s \to \Gamma^{s-1})
\]
(recall that $Z^s$ is a finitely generated abelian group).

The limit $\varprojlim \G^s_{\Bbbk}(\Gamma) = : \G_{\Bbbk}(\Gamma)$ is called the \emph{Maltsev completion of $\Gamma$ over $\Bbbk$}. This is a proalgebraic unipotent group that comes with a Zariski dense representation $\Gamma \to \G_{\Bbbk}(\Gamma)$ that is universal unipotent representation of $\Gamma$ over $\Bbbk$. A group is called \emph{rationally nilpotent} if the tower (\ref{Maltsev tower}) stabilises at a finite step (for $\Bbbk=\Q$; a posteriori this property does not depend on the field). Equivalently, $\G_{\Bbbk}(\Gamma)$ has finite nilpotency. This property is strictly weaker than being nilpotent.

Below we collect some basic features of the discussed construction.

\begin{prop}\label{completion basic prop}
Let $\Delta$ be a nilpotent group. The following holds.
\begin{enumerate}
\item $\dim_{\Bbbk}\G_{\Bbbk}(\Delta)=\rk \Delta$;
\item\label{extension} if $\Bbbk \hookrightarrow \Bbbk'$ is a field extension then $\G_{\Bbbk'}(\Delta)=\G_{\Bbbk}(\Delta) \o \Bbbk'$. In particular, $\G_{\Bbbk}(\Delta)=\G_{\Q}(\Delta) \o \Bbbk$;
\item the kernel of $\mu_{\Delta, \Bbbk}$ is precisely the torsion subgroup of $\Delta$;
\item\label{real completion} if $\Bbbk=\R$ the image of $\mu_{\Delta, \R}$ is a cocompact lattice in $\G_{\R}(\Delta)$.
\end{enumerate}
\end{prop}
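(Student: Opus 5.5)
The plan is to reduce all four items to the classical Maltsev theory for torsion-free finitely generated nilpotent groups, building up along the lower central series by induction on the nilpotency class. I would begin with the torsion: the torsion subgroup $T\subseteq\Delta$ is finite and normal, and every homomorphism from $\Delta$ to a unipotent group over a field of characteristic zero kills $T$, because unipotent groups over such fields are torsion-free (a unipotent $u$ with $u^n=1$ satisfies $u=\exp(\tfrac1n\log u^n)=1$). Hence $\G_{\Bbbk}(\Delta)=\G_{\Bbbk}(\Delta/T)$, and since $\rk\Delta=\rk(\Delta/T)$, I may assume from now on that $\Delta$ is torsion-free.

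For a torsion-free $\Delta$ I would use the explicit model $\G_{\Bbbk}(\Delta)=\exp(\mathfrak{n}\o\Bbbk)$. Here $\mathfrak{n}$ is the rational Lie algebra spanned by the $\log\gamma$ for $\gamma\in\Delta$, taken inside some faithful unipotent representation over $\Q$. Such a representation exists by the Hall/Jennings embedding of a torsion-free finitely generated nilpotent group into $U_n(\Z)$. For this model:
\begin{enumerate}
\item Item (3) holds because the representation is faithful.
\item For item (1), I would choose a Maltsev basis $\gamma_1,\dots,\gamma_r$ adapted to a refinement of the lower central series with infinite cyclic factors, so that $r=\rk\Delta$. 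Every element of $\Delta$ is then uniquely $\gamma_1^{a_1}\cdots\gamma_r^{a_r}$. The $\log\gamma_i$ form a basis of $\mathfrak{n}$, which I would show by induction using the central extensions $1\to Z^s\to\Delta^s\to\Delta^{s-1}\to 1$. This gives $\dim\G_{\Bbbk}=r$.
\item Item (4) follows because the map $(t_1,\dots,t_r)\mapsto\gamma_1^{t_1}\cdots\gamma_r^{t_r}$ is a diffeomorphism $\R^r\to\G_{\R}$ taking $\Z^r$ onto $\Delta$. The set $[0,1]^r$ therefore maps onto a compact set surjecting onto $\G_{\R}/\Delta$, so $\Delta$ is discrete and cocompact.
\end{enumerate}

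Item (2) follows formally from the universal property. The group $\G_{\Q}(\Delta)\o\Bbbk'$ is unipotent and receives $\Delta$ Zariski densely. Any morphism $\Delta\to\mathbf{G}(\Bbbk')$ factors through the Zariski closure of its image, which is unipotent, so it is determined on the Lie algebra by $\Q$-linear data. That data extends uniquely $\Bbbk'$-linearly, which gives the required universal factorisation.

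The main obstacle I expect is proving that this explicit model really satisfies the universal property. This amounts to extending an arbitrary $\rho\colon\Delta\to\mathbf{G}(\Bbbk)$ to an algebraic morphism. I would first replace $\mathbf{G}$ by the Zariski closure of $\rho(\Delta)$, which is a nilpotent algebraic group. I would then argue by induction on the class, using the central extension tower (\ref{Maltsev tower}), lifting via $\log$ and $\exp$ and the Baker--Campbell--Hausdorff formula.
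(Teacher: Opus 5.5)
The paper gives no proof of this proposition; it is quoted as classical background (Maltsev, Quillen), so your outline has to stand on its own. Its architecture is the standard one. The torsion reduction, item (3) and the compactness argument in item (4) are fine once your model $\exp(\mathfrak{n}\o\Bbbk)$ is known to be $\G_{\Bbbk}(\Delta)$. But that identification is where all the content lies, and you only sketch it.

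The missing step is this. For a unipotent $\Bbbk$-group $\mathbf{G}$ and $\rho\colon\Delta\to\mathbf{G}(\Bbbk)$, the rule $\log\gamma\mapsto\log\rho(\gamma)$ must extend to a well-defined linear map $\mathfrak{n}\o\Bbbk\to\operatorname{Lie}\mathbf{G}$; that is, linear relations among the $\log\gamma$ must survive $\rho$. A clean proof:
\begin{enumerate}
\item Let $\mathbf{H}$ be the Zariski closure of the graph $\{(\gamma,\rho(\gamma))\}$ in $\exp(\mathfrak{n}\o\Bbbk)\times\mathbf{G}$.
\item The graph is isomorphic to $\Delta$, so via a Maltsev basis it lies in the image of a polynomial map $\Bbbk^r\to\exp(\mathfrak{n}\o\Bbbk)\times\mathbf{G}$. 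Hence $\dim\mathbf{H}\le r$.
\item $\mathbf{H}$ surjects onto $\exp(\mathfrak{n}\o\Bbbk)$, which has dimension $r$. Unipotent groups in characteristic zero have no nontrivial finite subgroups, so $\mathrm{pr}_1|_{\mathbf{H}}$ is an isomorphism.
\item Set $\rho'=\mathrm{pr}_2\circ(\mathrm{pr}_1|_{\mathbf{H}})^{-1}$; uniqueness follows from Zariski density.
\end{enumerate}
This also shows your argument for item (2) is not formal. A $\Bbbk'$-group is not a $\Q$-group, so the universal property of $\G_{\Q}(\Delta)$ says nothing about targets over $\Bbbk'$. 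Your ``$\Q$-linear data'' presupposes exactly the well-definedness above. Once the model is known to be universal over every field of characteristic zero, (2) is just $\exp(\mathfrak{n}\o\Bbbk')=\exp(\mathfrak{n}\o\Bbbk)\o\Bbbk'$.

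Two further corrections. First, universality can only hold for unipotent targets. For $\Z\to\mathbb{G}_m(\Bbbk)$, $1\mapsto 2$, the Zariski closure of the image is $\mathbb{G}_m$, which is not unipotent, and every algebraic morphism from a unipotent group to $\mathbb{G}_m$ is trivial. So the Maltsev--Quillen property as printed in the paper must be read with $\mathbf{G}$ unipotent. Your claim in item (2) that the closure ``is unipotent'', and your ``nilpotent algebraic group'' in the last paragraph, are only right under that reading; it is also what $\log$, $\exp$ and BCH need.

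Second, a torsion-free $\Delta$ can have torsion in $\Delta_{j-1}/\Delta_j$. For example, $\langle x,y,z\mid [x,y]=z^2,\ z\text{ central}\rangle$ is torsion-free with abelianisation $\Z^2\oplus\Z/2$. So a refinement of the lower central series with infinite cyclic factors need not exist. A finite factor makes the corresponding $\log\gamma_i$ a combination of the later ones, so you would not get a basis of $\mathfrak{n}$. Use instead the isolators $\sqrt{\Delta_j}=\{g\mid g^n\in\Delta_j\text{ for some }n\geq 1\}$, or the upper central series. For torsion-free $\Delta$ these form a central series with torsion-free factors of total rank $\rk\Delta$.
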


We denote $\G_{\Z}(\Delta):=\im \mu_{\Delta, \Q}$ for a nilpotent group $\Delta$ and $\G_{\Z}^s(\Gamma):=\im \mu^s_{\Bbbk}$ for an arbitrary group $\Gamma$. Thanks to item (\ref{extension}) of Proposition \ref{completion basic prop}, we can identify $\im \mu^s_{ \Bbbk}$ for any field $\Bbbk$ with $\G^s_{\Z}(\Gamma)$ via  the embedding $\G^s_{\Q}(\Gamma)(\Q) \hookrightarrow \G^s_{\Bbbk}(\Gamma)(\Bbbk)$.

The discussed construction is functorial: if $\phi \colon \Gamma \to \Gamma'$ is a group homomorphism, it induces a morphism of $\Bbbk$-groups $\G^s_{\Bbbk}(\Gamma) \to \G^s_{\Bbbk}(\Gamma')$  that sends $\G^s_{\Z}(\Gamma)$ to $\G^s_{\Z}(\Gamma')$.

Item (\ref{real completion}) of Propositon \ref{completion basic prop} implies strong consequences on cohomology of torsion free nilpotent groups.
\begin{prop}\label{cohomological dimension prop}
Let $\Delta$ be a finitely presented torsion free nilpotent group of rank $r$. Then $H^r(\Delta, \Z)=\Z$ and $H^k(\Delta, \Z)=0$ for $k>r$.
\end{prop}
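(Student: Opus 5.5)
The plan is to realize $\Delta$ as a cocompact lattice in a nilpotent Lie group and use the resulting closed aspherical manifold. By item (\ref{real completion}) of Proposition \ref{completion basic prop}, together with item (3) (the kernel of $\mu_{\Delta,\R}$ is the torsion subgroup, which is trivial here), $\Delta$ embeds as a cocompact lattice in $N:=\G_{\R}(\Delta)(\R)$. Since $\G_{\R}(\Delta)$ is a connected, simply connected unipotent group, $N$ is a connected, simply connected nilpotent Lie group. Through the exponential map it is diffeomorphic to $\R^n$, where $n=\dim_{\R}\G_{\R}(\Delta)=\rk\Delta=r$ by item (1).

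Next, $\Delta$ is torsion free and discrete, so it acts freely and properly discontinuously on $N$ by left translations. Hence $M:=\Delta\backslash N$ is a smooth manifold of dimension $r$. It is compact because the lattice is cocompact, and it has no boundary. Its universal cover is $N\cong\R^r$, which is contractible, so $M$ is a $K(\Delta,1)$. Consequently $H^k(\Delta,\Z)\cong H^k(M,\Z)$ for all $k$.

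The vanishing for $k>r$ is immediate, since a closed $r$-manifold (indeed any $r$-dimensional CW complex) has no cohomology above degree $r$. For the top degree, orientability of $M$ is what needs checking. Left-invariant vector fields on $N$ descend to $M$, so a basis of the Lie algebra gives a global frame and $M$ is parallelizable, hence orientable. (Alternatively, a left-invariant volume form on $N$ descends to $M$.) For a closed, connected, orientable $r$-manifold, $H^r(M,\Z)\cong\Z$ by Poincaré duality, which gives $H^r(\Delta,\Z)=\Z$.

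The only delicate step is the identification of $M$ as a closed manifold of dimension exactly $r$. This uses three facts: freeness of the action, which comes from torsion-freeness; cocompactness; and the dimension count $\dim \G_{\R}(\Delta)=\rk\Delta$. All three are supplied by Proposition \ref{completion basic prop}, so once these are in hand the rest is standard topology.
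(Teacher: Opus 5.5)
Your proof is correct and matches the paper's argument: $M=\Delta\backslash\G_{\R}(\Delta)$ is a compact $K(\Delta,1)$ of dimension $\rk\Delta=r$, which gives the vanishing above degree $r$, and parallelisability via invariant frames gives orientability and hence $H^r(M,\Z)=\Z$. One small correction to your last paragraph: the left-translation action of any subgroup of $N$ is automatically free, so torsion-freeness is not what gives freeness; it is used, via item (3), to make $\mu_{\Delta,\R}$ injective, so that $\pi_1(M)\cong\Delta$ rather than a quotient of $\Delta$.
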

\begin{proof}
Since $\Delta$ is torsion free, $\Delta \to \G_{\Z}(\Delta)$ is an isomorphism. Let $M:=\G_{\Z}(\Delta) \backslash \G_{\R}(\Delta)$. This is a compact manifold whose fundamental group is isomorphic to $\G_{\Z}(\Delta) \simeq \Delta$ and universal cover is homeomorphic to $\G_{\R}(\Delta)$, thus contractible via the exponential map.  Therefore, it is a $K(\Delta, 1)$-space. Its dimension is $\dim M=\dim_{\R}\G_{\R}(\Delta)=r$, hence $H^k(M, \Z)=H^k(\Delta, \Z)=0$ for $k>r$. The manifold $M$ is parallelisable (via the right action of $\G_{\R}(\Delta)$), so it is orientable and $H^r(M, \Z)=\Z$.
\end{proof}

\subsection{Higher Albanese manifolds}\label{higher albanese subsec}

Let $X$ be normal projective variety over $\C$ and $\Gamma:=\pi_1(X,x)$. We denote $\G^s_{R}(X,x):= \G^s_R(\Gamma)$ where $R$ is either $\Z$ or a field of characteristic zero. We denote by $\g^s_{\Bbbk}(X,x)$ the Lie algebra of $\G^s_{\Bbbk}(X,x)$.

The following theorem was proved several times by different authors using different techniques (\cite{Morg, Hain, Simp}). We follow the Hain's approach \cite{Hain}. 

\begin{thrm}[Morgan, Hain, Simpson]
For each $s$ the $\Q$-vector space $\g^s_{\Q}(X,x)$ carries a polarisable mxed $\Q$-Hodge structure $(\g^s_{\Q}(X, x), W_{\bullet}, F^{\bullet})$ such that the following holds:
\begin{enumerate}
\item $W_{-1}\g^s_{\Q}(X, x)=\g^s_{\Q}(X,x)$ (i.e. it is concentrated in negative weights);
\item\label{lie} the Lie bracket $[-,-] \colon \Lambda^2\g^s_{\Q}(X,x) \to \g^s_{\Q}(X, x)$ is a morphism of mixed Hodge structures;
\item if $(X,x) \to (Y,y)$ is a morphism between marked normal varieties,the induced map $\g^s_{\Q}(X,x) \to g^s_{\Q}(Y, y)$ is a morphism of mixed Hodge structures;
\item for $s=1$ it is dual to the Deligne's mixed Hodge structure on cohomology under identification
\[
\g^1_{\Q}(X,x)\simeq H_1(\pi_1(X), \Q)\simeq H_1(X, \Q) \simeq H^1(X, \Q)^*.
\]
\end{enumerate}
\end{thrm}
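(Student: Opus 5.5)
The plan is to follow Hain's approach through Chen's iterated integrals and the bar construction. The idea is to put a mixed Hodge structure on the Hopf algebra $\O(\G_{\Q}(X,x))$ of regular functions on the Maltsev completion. We then pass to the level-$s$ truncation and dualise to get $\g^s_{\Q}(X,x)$.

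\textbf{Step 1: a multiplicative mixed Hodge complex.} First I fix a functorial commutative (or $E_\infty$) differential graded algebra model $A^\bullet(X)$ computing $H^\bullet(X,\Q)$, with an augmentation at $x$, carrying the structure of a mixed Hodge complex. For $X$ smooth this is the log de Rham complex on a good compactification with Deligne's weight and Hodge filtrations, together with a rational Sullivan--Thom--Whitney model. For $X$ normal I pass to a smooth proper simplicial hypercovering $X_\bullet \to X$ and take the Thom--Whitney totalisation, following Navarro-Aznar. The multiplicativity of $W$ and $F$ is the technical heart. I expect the construction of such a functorial multiplicative mixed Hodge complex to be the main obstacle, since the base point and the singularities must be handled compatibly with products.

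\textbf{Step 2: bar construction and Chen's theorem.} Next I form the reduced bar construction $B(A^\bullet(X),x)$. I give it the convolution of the weight filtration of $A$ with the bar-length filtration, and the induced Hodge filtration, so that it becomes a mixed Hodge complex. By Chen's $\pi_1$ de Rham theorem, $H^0 B(A^\bullet(X),x)$ is isomorphic as a Hopf algebra to $\O(\G_{\Q}(\Gamma))$. Hence $\O(\G_{\Q}(\Gamma))$ acquires a mixed Hodge structure, which is an ind-object since bar length is unbounded. Product, coproduct and antipode come from maps of mixed Hodge complexes, so they are morphisms of mixed Hodge structures. The bar-length filtration $L_s$ on $H^0B$ corresponds to the dual of the lower central series, so $\O(\G^s_{\Q})\subset H^0B$ is the sub-Hopf algebra generated by $L_s$. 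It is therefore a sub-mixed Hodge structure.

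\textbf{Step 3: passing to $\g^s$.} The Lie coalgebra of indecomposables $I/I^2$, where $I$ is the augmentation ideal of $\O(\G^s_{\Q})$, inherits a mixed Hodge structure, and its cobracket is induced by $\Delta-\Delta^{\mathrm{op}}$. I define $\g^s_{\Q}(X,x)$ as its dual. Then the bracket is a morphism, which is item (\ref{lie}).

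\textbf{Step 4: the remaining properties.}
\begin{enumerate}
\item For negativity of weights: $H^1(X)$ has weights $\ge 1$. Moreover, $H^0B$ in bar length $\ge1$ is built from the $H^1$ and $H^2$ terms with weight shift. So $I$ has weights $\ge1$, and the dual $\g^s$ has weights $\le -1$.
\item Functoriality follows from functoriality of the model in Step 1.
\item For $s=1$, the length-one part of $H^0B$ is $H^1(X,\Q)$ with Deligne's structure, which gives the identification with $H^1(X,\Q)^*$.
\item Polarisability holds because each $\Gr^W$ of $\g^s$ is a subquotient of tensor products of $\Gr^W_\bullet H^{\le 2}(X)$. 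By Deligne these are polarisable, and polarisable Hodge structures form a semisimple category closed under tensor products and subquotients.
\end{enumerate}
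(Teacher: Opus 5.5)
Your proposal follows Hain's iterated-integral and bar-construction route, which is exactly the approach the paper adopts by citing \cite{Hain} (it gives no proof of its own), and the outline is sound. The one step to tighten is item (1): $H^2$ plays no role there, since $\Gr^L_r H^0B$ embeds as a mixed Hodge structure into $H^1(X)^{\otimes r}$; moreover, the needed vanishing $W_0H^1(X)=0$ is exactly where normality is used, because it follows from injectivity of $H^1(X)\to H^1(\widetilde X)$ for a resolution whose fibres are connected by Zariski's main theorem, and it fails for, e.g., a nodal cubic.
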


Remark that item (\ref{lie}) implies that the projections $\g^s_{\Q}(X,x) \to \g^{s-1}_{\Q}(X,x)$ are morphisms of mixed Hodge structures. 

It also implies that $[F^p\g^s_{\C}(X,x), F^q\g^s_{\C}(X,x)] \subseteq F^{p+q}\g^s_{\C}(X,x)$. In particular, $F^0\g^s_{\C}(X,x)$ is a Lie subalgebra and $F^0\G^s(X,x):=\exp(F^0\g^s_{\C}(X,x))\subset \G^s_{\C}(X,x)$ is a closed subgroup.

\begin{df}[Hain - Zucker, \cite{HZ}]
The \emph{$s$-th Albanese manifold} of $X$ is defined as
\[
\Alb^s(X, x):=\G^s_{\Z}(X,x) \backslash \G^s_{\C}(X,x)/F^0\G^s(X,x).
\]
\end{df}

$\Alb^s(X)$ is always a smooth complex manifold. 

A morphism of marked algebraic varieties $f \colon (X,x) \to (Y,y)$ induces a holomorphic map $\Alb^s(f) \colon \Alb^s(X,x) \to \Alb^s(Y, y)$.

For $s=1$ one recovers the classical (quasi-)Albanese variety.

\begin{rmk}
Although the mixed Hodge structure on $\g^s_{\Q}(X,x)$ does depend on the marked point $x \in X$ the higher Albanese manifolds $\Alb^s(X,x)$ and $\Alb^s(X, x')$ are canonically biholomorphic for different points $x$ and $x'$. Starting from now we will always omit the marked point from the notation and write simply $\pi_1(X), \ \g^s_{\Q}(X), \ \Alb^s(X)$ etc.
\end{rmk}

The projections $\G^s_{\C}(X) \to \G^{s-1}_{\C}(X)$ descend to holomorphic maps $p^s \colon \Alb^s(X) \to \Alb^{s-1}(X)$. Each $p^s$ is a holomorphic principal $C^s$-bundle, where $C^s$ is a connected commutative complex Lie group.

This group can be described explicitly as follows. Let 
\[
\mathcal{Z}^s_{\C}(X):=\ker(\G^s_{\C}(X) \to \G^{s-1}_{\C}(X)).
\]
It is the additive group of a complex vector space and  $\mathcal{Z}_{\Z}^s(X):=\mathcal{Z}^s_{\C}(X) \cap \G^s_{\Z}(X)$ is a Zariski dense lattice in it contained in $\mathcal{Z}^s_{\R}(X) \subset \mathcal{Z}^s_{\C}(X)$. The Lie algebra $\z_{\Q}^s(X)=\operatorname{Lie}(\mathcal{Z}_{\Q}^s(X))$ is a Hodge substructure of $\g^s_{\Q}(X)$ and $F^0\z^s=\z^s_{\C} \cap F^0\g^s$. The exponential map $\exp \colon \z^s \to \mathcal{Z}^s$ is an isomorphism and $F^{\bullet}\mathcal{Z}^s_{\C}=\exp(F^{\bullet}\z^s)$ defines a Hodge filtration on the vector space $\mathcal{Z}^s_{\C}(X)$. The group $C^s$ can be described as
\[
C^s=\mathcal{Z}_{\Z}^s(X) \backslash \mathcal{Z}_{\C}^s(X)/F^0\mathcal{Z}^s,
\]
i.e. it is the Jacobian of the (mixed) Hodge structure $(\mathcal{Z}^s(X), F^{\bullet}\mathcal{Z}^s(X))$. 

Let us say a few words about the topology of higher Albanese manifolds.

Its fundamental group isomorphic to $\G^s_{\Z}(X,x)$ and the universal cover $\widetilde{\Alb^s(X)}=\G^s_{\C}(X,x)/F^0\G^s(X,x)$ is contractible. Hence, it has homotopy type $K(\G^s_{\Z}(X,x), 1)$.

\begin{prop}\label{albr homotopy equiv prop}
Let $\Alb^s_{\R}(X):=\G^s_{\Z}(X) \backslash \G^s_{\R}(X)$. This is a smooth compact manifold that admits a smooth embedding $\Alb^s_{\R}(X) \hookrightarrow \Alb^s(X)$ which is a homotopy equivalence. 
\end{prop}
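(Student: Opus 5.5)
The plan is to exhibit $\Alb^s_{\R}(X)$ directly as the orbit of the base point under the real points, and then show that the inclusion is a homotopy equivalence because both spaces are aspherical with the same fundamental group.

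\emph{The map.} Write $G_{\C}:=\G^s_{\C}(X)$, $G_{\R}:=\G^s_{\R}(X)$, $F^0G:=F^0\G^s(X)$ and $\Gamma:=\G^s_{\Z}(X)$. Consider the composition
\[
G_{\R} \hookrightarrow G_{\C} \to G_{\C}/F^0G,
\]
that is, the orbit map of the base point $eF^0G$ under left multiplication by $G_{\R}$. It is left $\Gamma$-equivariant, so it descends to a smooth map $\iota \colon \Alb^s_{\R}(X) \to \Alb^s(X)$.

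\emph{Injectivity and embedding.} The key point is that the orbit map $G_{\R}\to G_{\C}/F^0G$ is an injective immersion. Its fibres are the cosets of $G_{\R}\cap F^0G$, and its differential at $e$ is $\g_{\R}\to \g_{\C}/F^0\g$. So it suffices to show $\g_{\R}\cap F^0\g_{\C}=0$.

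This follows from the Hodge theory of the mixed Hodge structure. Since $\g$ is concentrated in weights $\le -1$, we have $F^0\cap \overline{F^0}=0$ on each graded piece $\Gr^W_k$ with $k\le -1$. Indeed, $\Gr^W_k\g=\bigoplus_{p+q=k}H^{p,q}$, and $F^0\cap\overline{F^0}$ would need $p\ge 0$ and $q\ge 0$, contradicting $p+q<0$. By strictness and induction along $W$, this gives $F^0\g\cap\overline{F^0\g}=0$. A real vector in $F^0$ lies in $F^0\cap\overline{F^0}$, so $\g_{\R}\cap F^0\g=0$.

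Because $G_{\R}$ and $F^0G$ are unipotent groups and $\exp$ is bijective, I expect this to upgrade to $G_{\R}\cap F^0G=\{e\}$. Indeed, $G_{\R}\cap F^0G$ is a real unipotent subgroup whose Lie algebra is contained in $\g_{\R}\cap F^0\g=0$, hence it is trivial. At a general point $g$, the differential is identified by translation with the one at $e$, and left multiplication by $g\in G_{\R}$ preserves $G_{\R}$, so the orbit map is an injective immersion everywhere.

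It remains to descend to the quotient. Suppose $\gamma g F^0G=g'F^0G$ with $g,g'\in G_{\R}$ and $\gamma\in\Gamma\subset G_{\R}$. Then $\gamma g$ and $g'$ lie in the same $F^0G$-coset inside $G_{\R}$, which forces $\gamma g=g'$. So $\iota$ is an injective immersion. Since $\Alb^s_{\R}(X)$ is compact, $\iota$ is a closed embedding.

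\emph{Homotopy equivalence.} Both spaces are $K(\Gamma,1)$'s:
\begin{itemize}
\item $\Alb^s_{\R}(X)$ has universal cover $G_{\R}$, which is contractible via $\exp$ (as in Proposition \ref{cohomological dimension prop}).
\item $\Alb^s(X)$ has contractible universal cover $G_{\C}/F^0G$, as noted above. This is again via $\exp$: the space is diffeomorphic to $\g_{\C}/F^0\g$, for instance using a complement and the fibration $G_{\C}\to G_{\C}/F^0G$ with contractible fibre.
\end{itemize}
The map $\iota$ lifts to the $\Gamma$-equivariant map of universal covers $G_{\R}\to G_{\C}/F^0G$. Hence it induces the identity on $\pi_1=\Gamma$, and by Whitehead's theorem it is a homotopy equivalence.

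One could alternatively note that $G_{\R}\to G_{\C}/F^0G$ is in fact a diffeomorphism by the dimension count $\dim_{\R}\g_{\R}=\dim_{\C}\g_{\C}\ge\dim_{\R}(\g_{\C}/F^0)$. That count is not needed, however.

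The main obstacle is the first step: establishing $\g_{\R}\cap F^0\g=0$ from negativity of weights. One must check that the graded argument passes through the weight filtration, using that $W$ is defined over $\R$ and that $F$ induces pure structures on each $\Gr^W$. After that, the passage from Lie algebras to groups via unipotence should be routine.
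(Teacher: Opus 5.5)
Your argument is correct and is the paper's proof with the details filled in. Negative weights give $F^0\g^s\cap\overline{F^0\g^s}=0$, hence $\g^s_{\R}\cap F^0\g^s=0$, hence the orbit map $\G^s_{\R}(X)\to\G^s_{\C}(X)/F^0\G^s(X)$ is an injective immersion descending to an embedding, which is a homotopy equivalence because both sides are $K(\G^s_{\Z}(X),1)$'s and the map is an isomorphism on $\pi_1$. You use the compactness of $\Alb^s_{\R}(X)$ without comment; it is the cocompactness of $\G^s_{\Z}(X)$ in $\G^s_{\R}(X)$, item (\ref{real completion}) of Proposition \ref{completion basic prop}.

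Drop the closing aside, though, since it is false. The condition $F^0\g\cap\overline{F^0\g}=0$ forces $2\dim_{\C}F^0\g\le\dim_{\C}\g_{\C}$, so actually $\dim_{\R}\g_{\R}\le\dim_{\R}(\g_{\C}/F^0\g)$, with equality only in the compact case. In general the orbit map is not a diffeomorphism: for $s=1$ and $H_1(X)$ of Tate type it is $\R^k\hookrightarrow\C^k$, descending to $(S^1)^k\subset(\C^{\times})^k$.
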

\begin{proof}
$\Alb^s_{\R}(X)$ is a smooth manifold since $\G^s_{\Z}(X)$ is torsion free. It is compact by item (\ref{real completion}) of Proposition \ref{completion basic prop}.

Since the mixed Hodge structure on $\g^s(X)$ is concentrated in negative weights, one has $F^0\g^s(X) \cap \overline{F^0\g^s(X)}=0$, and thus $\g^s_{\R}(X) \cap F^0\g^s(X)=0$. Therefore, the composition 
\[
\g^s_{\R}(X) \hookrightarrow \g^s_{\C}(X) \to \g^s_{\C}(X)/F^0\g^s(X)
\]
is injective.  This implies the injectivity of the map
\[
\G^s_{\R}(X) \to \G^s_{\C}(X) \to \G^s_{\C}(X)/F^0\G^s(X).
\]
Since $\G^s_{\Z}(X) \subset \G^s_{\R}(X)$, it descends to a smooth embedding $\Alb^s_{\R}(X) \hookrightarrow \Alb^s(X)$. Both $\Alb^s_{\R}(X)$ and $\Alb^s(X)$ are aspherical and the embedding induces an isomorphism on the fundamental groups, thus a homotopy equivalence. The last assertion follows from Proposition \ref{cohomological dimension prop} applied to $\G^s_{\Z}(X)$.
\end{proof}

\subsection{Higher Albanese maps}

\begin{thrm}[Hain - Zucker, \cite{HZ}]\label{maps exist thrm}
For a normal quasi-projective variety $X$ and a natural number $s$ there exists a holomorphic map $\alb^s \colon X \to \Alb^s(X)$ such that 
\[
(\alb^s)_* \colon \pi_1(X) \to \pi_1(\Alb^s(X))=\G^s_{\Z}(X)
\]
coincides with the natural projecton $\pi_1(X) \xrightarrow{\varpi^s} \Gamma^s \xrightarrow{/(\mathrm{torsion})} \G^s_{\Z}(X)$.

The diagram of holomorphic maps
\begin{equation}\label{diagram}
\xymatrix{
& \vdots \ar[d]^{p^{s+1}}\\
&\Alb^s(X) \ar[d]^{p^s} \\
& \vdots \ar[d]^{p^3}\\
& \Alb^2(X) \ar[d]^{p^2}\\
X^{\an} \ar[r]_{\alb} \ar[ru]^{\alb^2} \ar[ruuu]^{\alb^s}& \Alb(X).
}
\end{equation}
commutes. If $f \colon (X,x) \to (Y,y)$ is a morphism of marked normal varieties, it induces a commutative diagram
\[
\xymatrix{
X \ar[r]^f \ar[d]_{\alb^s_X} & Y \ar[d]^{\alb^s_Y}\\
\Alb^s(X) \ar[r]_{\Alb^s(f)} & \Alb^s(Y).
}
\]
\end{thrm}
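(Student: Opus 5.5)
The plan is to realise $\alb^s$ as a period map for the tautological variation of mixed Hodge structure given by the unipotent completions of path torsors, following Hain's approach. Fix the base point $x$ and, for each $y\in X$, let $P^s_{x,y}$ denote the $s$-th unipotent completion of the torsor of homotopy classes of paths from $x$ to $y$. Over $\C$ this is a right torsor under $\G^s_{\C}(X,x)$. It contains the image of the discrete set of homotopy classes of paths, which is a torsor under $\G^s_{\Z}(X,x)$. By Hain's theory it also carries a mixed Hodge structure depending on $y$, compatible with the one on $\g^s(X,x)$. As $y$ varies, these torsors form a local system over $X^{\an}$ whose monodromy is the universal $s$-step unipotent representation $\mu^s$.

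\textbf{Step 1: the map on the universal cover.} A point $\tilde y$ of the universal cover is a pair consisting of $y$ and a homotopy class $\gamma$ of paths from $x$ to $y$; its image in $P^s_{x,y}$ trivialises the torsor. The Hodge filtration singles out the subset $F^0P^s_{x,y}$, which is an $F^0\G^s(X,x)$-coset. Transporting it through the trivialisation gives a point
\[
\widetilde{\alb^s}(\tilde y)\in \G^s_{\C}(X,x)/F^0\G^s(X,x).
\]
Changing $\gamma$ by a loop $\delta\in\pi_1(X,x)$ multiplies this point on the left by $\mu^s(\delta)$. Hence $\widetilde{\alb^s}$ descends to a map $X^{\an}\to\Alb^s(X,x)$.

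\textbf{Step 2: properties of the map.} The induced map on fundamental groups is $\mu^s$ by construction, and its image is $\G^s_{\Z}(X)$; this gives the assertion on $(\alb^s)_*$. Concretely, one can write $\widetilde{\alb^s}$ via Chen iterated integrals $\int_\gamma \omega_1\cdots\omega_k$ of length at most $s$. The entries are drawn from a mixed Hodge complex of logarithmic forms on a good compactification, namely Hain's bigraded model. In this description holomorphicity reduces to the fact that $F^1$ of the first weight pieces is spanned by holomorphic logarithmic $1$-forms.

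\textbf{Step 3: compatibilities.} The projections $P^s\to P^{s-1}$ and $\G^s\to\G^{s-1}$ are morphisms of mixed Hodge structures. Therefore the construction commutes with $p^s$, which gives commutativity of diagram (\ref{diagram}). For $s=1$ the construction reduces to integration of holomorphic log forms, i.e. the classical quasi-Albanese map. Functoriality for a morphism $f\colon(X,x)\to(Y,y)$ follows because $f_*$ maps path torsors to path torsors. It respects the Hodge filtrations by item (3) of the Morgan--Hain--Simpson theorem, so it intertwines the two period maps.

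\textbf{Main obstacle.} The hard step is showing that the fibrewise Hodge filtrations $F^0P^s_{x,y}$ vary holomorphically in $y$. This is what makes $\widetilde{\alb^s}$ holomorphic rather than merely real-analytic; in other words, the path torsors must form a graded-polarisable variation of mixed Hodge structure, not just a family of mixed Hodge structures. I would try first to deduce this from the iterated-integral description above. There, the $y$-dependence enters only through the endpoint of integration of forms that are holomorphic in the $F^1$ directions, so holomorphicity should follow directly. Griffiths transversality, which the paper uses later, should come out of the same computation.
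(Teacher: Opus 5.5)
Your construction is the one behind the cited result. The paper gives no proof of its own, and in Proposition~\ref{horizontality prop} it uses $\alb^s$ exactly as a factor of the period map of a unipotent variation, following Hain--Zucker. Steps~1 and~3 are fine, granted Hain's mixed Hodge structures on path torsors.

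The gap is in the step you flag as the main obstacle: the reason you give for holomorphicity does not work for $s\ge 2$. That $y$ enters only as an endpoint of integration holds for the transport of any flat connection. Holomorphicity also does not follow from $F^1H^1$ being spanned by holomorphic logarithmic forms. The integrands include antiholomorphic representatives and correction forms $\xi$ with $d\xi=\sum a_{ij}\,\omega_i\wedge\omega_j$, and these are not holomorphic.

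Already for $s=2$ and $X$ a once-punctured elliptic curve this shows up. Write $\omega=\alpha X+\beta Y+\xi Z$ with $[X,Y]=Z$, where:
\begin{itemize}
\item $X,Y,Z$ have types $(-1,0),(0,-1),(-1,-1)$ in a basis adapted to Deligne's splitting, so $F^0\g^2=\C Y$;
\item $\alpha$ is holomorphic and $\beta$ is antiholomorphic;
\item $d\xi=-\alpha\wedge\beta$.
\end{itemize}
Put $T=\exp(aX+bY+cZ)$ with $T^{-1}dT=\omega$. The right $F^0\G^2$-invariant holomorphic coordinate $w=c-\tfrac12 ab$ on $\G^2_{\C}/F^0\G^2$ satisfies $dw=\xi-b\,\alpha$, so $\dibar w=\xi^{0,1}$. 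Thus the map is holomorphic exactly when the non-holomorphic form $\xi$ is chosen of type $(1,0)$. Replacing $\xi$ by $\xi+dg$ adds $dg$ to $dw$, so an arbitrary choice breaks holomorphicity.

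Drawing the integrands from a Hodge-compatible model is the right instinct. What you must extract from it is that the whole connection form can be chosen in $F^0\bigl(A^1\otimes\g^s\bigr)$, for the Hodge filtration of the logarithmic complex $A^\bullet$. Then $T^{-1}\dibar T=\omega^{0,1}\in A^{0,1}\otimes F^0\g^s$. This is precisely holomorphicity of the lift $\widetilde X\to\G^s_{\C}/F^0\G^s$ on the universal cover, and the $(1,0)$-part gives transversality. Producing such an $\omega$ means solving for all correction forms inside the correct Hodge level. That uses strictness of $d$ with respect to $F$ on the mixed Hodge complex (the $\di\dibar$-lemma in the compact case). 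Equivalently, it is Hain--Zucker's theorem that the canonical unipotent local system underlies a graded-polarisable variation. This is the real content of the statement, and your sketch does not supply it.

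Separately, logarithmic forms on a good compactification presuppose $X$ smooth, whereas the statement is for normal $X$, so you also need a descent step. For instance, take a resolution $\rho\colon X'\to X$; by normality $\rho$ has connected fibres and $\rho_*$ is onto on $\pi_1$. The restriction of $\Alb^s(\rho)\circ\alb^s_{X'}$ to a fibre of $\rho$ is trivial on $\pi_1$. Hence it lifts to $\G^s_{\C}/F^0\G^s\cong\C^N$ and is therefore constant. The induced continuous map $X\to\Alb^s(X)$ is holomorphic where $\rho$ is an isomorphism, hence everywhere by normality. Note that even defining $\Alb^s(\rho)$ uses Hain's mixed Hodge structure on $\pi_1$ of the singular variety $X$.
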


Higher Albanese manifolds are merely complex manifolds and usually do not admit any structure of algebraic varieties. Nevertheless, they possess some canonical definable structure.  We assume that the reader is familiar with definable complex analytic geometry in the spirit of \cite{BBT}. In any case, the only corollary that is important in the proof of the main theorem is Proposition \ref{closure def prop} below.

\begin{thrm}[\cite{Rog}]\label{definability thrm}
The manifolds $\Alb^s(X)$ can be endowed with structure of $\R_{\alg}$-definable smooth complex analytic spaces in such a way that the maps $p^s$ and $\Alb^s(f)$ are $\R_{\alg}$-definable and  $\alb^s \colon X^{\odef} \to \Alb^s(X)$ are $\R_{\an, \exp}$-definable.
\end{thrm}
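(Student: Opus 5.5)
The plan is to construct the definable structures explicitly. The complex manifold structure is built from semi-algebraic charts coming from a fundamental set for the $\G^s_{\Z}(X)$-action. Definability of $\alb^s$ is then checked locally near the boundary of a log compactification, where it has the shape of a nilpotent orbit.

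\emph{Step 1: the universal cover.} Since $\G^s_{\C}(X)$ is unipotent, $\exp$ is a polynomial isomorphism and the group law is polynomial in exponential coordinates. Hence $\G^s_{\C}(X)$ is an affine algebraic variety over $\R$. The quotient $D^s:=\G^s_{\C}(X)/F^0\G^s(X)$ by the unipotent subgroup $F^0\G^s$ is also affine, indeed isomorphic to an affine space, by choosing a complement to $F^0\g^s$ and using exponential coordinates of the second kind. The left action of $\G^s_{\C}(X)$ on $D^s$ is algebraic. I would also record that $\G^s_{\R}(X)\to D^s$ is injective, which is the argument from Proposition \ref{albr homotopy equiv prop}.

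\emph{Step 2: a semi-algebraic fundamental set and the $\R_{\alg}$-structure.} I would choose a Maltsev basis of $\G^s_{\Z}(X)$ adapted to the tower (\ref{Maltsev tower}). Let $B\subset \G^s_{\R}(X)$ be the image of $[0,1]^r$ under the coordinates of the second kind, so $B$ is a compact semi-algebraic fundamental set for $\G^s_{\Z}\backslash\G^s_{\R}$. Next choose a real-algebraic section $T\subset D^s$ transversal to the $\G^s_{\R}$-orbits, which is possible inductively in $s$. Concretely, $C^s$ is the quotient of the vector space $\z^s_{\C}/F^0\z^s$ by the lattice $\mathcal{Z}^s_{\Z}$, whose real span $\mathcal{Z}^s_{\R}$ injects into that quotient. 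So I take $\z^s_{\C}/F^0\z^s=\mathcal{Z}^s_{\R}\oplus(\text{a complementary real subspace})$ and lift step by step. Then $\Omega:=B\cdot T$ is a semi-algebraic fundamental set. Since $B$ is compact and $\G^s_{\Z}$ is discrete, only finitely many $\gamma$ satisfy $\gamma\Omega\cap\Omega\neq\emptyset$. The identifications on $\Omega$ are therefore given by finitely many polynomial maps, and this defines an $\R_{\alg}$-definable complex manifold structure on $\Alb^s(X)$. The maps $p^s$ and $\Alb^s(f)$ lift to algebraic maps on the $D^s$'s, namely those induced by algebraic group homomorphisms. Because the fundamental sets are chosen compatibly along the tower, these maps are $\R_{\alg}$-definable.

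\emph{Step 3: definability of $\alb^s$.} Take a resolution $\tilde X\to X$ with a log-smooth compactification $\tilde X\subset\overline{X}$. Because $X$ is normal, $\pi_1(\tilde X)\to\pi_1(X)$ is surjective. Functoriality (Theorem \ref{maps exist thrm}) then reduces definability of $\alb^s_X$ to that of $\alb^s_{\tilde X}$ composed with the definable map $\Alb^s(\tilde X)\to\Alb^s(X)$, together with the fact that images under definable proper surjections stay definable. Cover $\overline{X}$ by finitely many polydiscs with $U\cap\tilde X=(\Delta^*)^k\times\Delta^l$. The monodromies $T_j=\exp N_j$ are unipotent. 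The lifted map is a period map of a unipotent admissible variation, given by Chen iterated integrals, so I expect the local form
\[
\widetilde{\alb^s}(z)=\exp\Bigl(\textstyle\sum_j \tfrac{\log z_j}{2\pi\i}N_j\Bigr)\cdot\psi(z),
\]
with $\psi$ holomorphic on the whole polydisc (a nilpotent orbit statement). On a product of sectors with $0\le\arg z_j<2\pi$, this expression is $\R_{\an,\exp}$-definable. Its image lies in finitely many translates $\gamma\Omega$, because $\operatorname{Re}$ of the $\log$-terms ranges over a bounded real interval while $\operatorname{Im}$ only moves along the nilpotent $\G^s_{\R}$-directions. This gives definability on each chart, and compactness of $\overline{X}$ gives definability globally.

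The main obstacle will be Step 3: establishing the local normal form with $\psi$ extending holomorphically across the boundary. I would first try to prove this by induction on $s$ using the $C^s$-torsor structure, where at each stage the new coordinate is an iterated integral with at most logarithmic growth. The second delicate point is checking that the image of each sector meets only finitely many translates of $\Omega$, which is what ties the $\R_{\an,\exp}$-structure of $X$ to the $\R_{\alg}$-structure of $\Alb^s(X)$.
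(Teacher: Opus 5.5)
The paper does not prove this statement; it quotes it from \cite{Rog}, so your argument has to stand on its own. Several parts are fine:
\begin{itemize}
\item Step 1 is correct.
\item The gluing mechanics of Step 2 work. For every $g\in\G^s_{\C}(X)$ the filtration $\Ad(g)F^{\bullet}$ together with $W$ is still a mixed Hodge structure of negative weights, so $\G^s_{\R}(X)$ acts freely on $D^s$. Your induction produces a global semi-algebraic section, and then only finitely many $\gamma$ glue.
\item The local normal form in Step 3 is the standard part. The lifted map is a flat section of a unipotent connection with logarithmic poles and commuting nilpotent residues on a normal crossings compactification, so $\psi$ extends holomorphically.
\end{itemize}

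The genuine gap is the point you called the ``second delicate point''. In Step 2 you build $T$ from an \emph{arbitrary} real complement. But definability of $\alb^s$ and of $\Alb^s(f)$ depends essentially on this choice, and your sentence about the real and imaginary parts of the log-terms does not control it. The claim already fails for $s=1$:
\begin{itemize}
\item Take $X=E\times\C^*$ with $E=\C/(\Z+\Z\tau)$. Then $\alb=\mathrm{id}$ and $\Alb(X)=\C^2/\Lambda$ with $\Lambda=\Z(1,0)+\Z(\tau,0)+\Z(0,1)$, $R=\C\times\R$ and $Q=\C\times 0$.
\item Every line $L=\R(a,1)$ with $a\in\C$ is an admissible complement.
\item For $\Omega=B+iL$, the point $(u,w)$ has chart coordinates $(u-ia\operatorname{Im}w \bmod \Z+\Z\tau,\ \operatorname{Re}w \bmod 1,\ \operatorname{Im}w)$.
\item In terms of $\zeta=e^{2\pi i w}$, the first coordinate is $u+\frac{ia}{2\pi}\log|\zeta|$ modulo the lattice. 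If $a\neq 0$, it passes through infinitely many translates of the fundamental parallelogram as $\zeta\to 0$ along $(0,\epsilon)$.
\item Hence $\mathrm{id}\colon X\to\Alb(X)$ is not definable in any o-minimal structure.
\end{itemize}
The same computation shows that the identity between the structures attached to two different $L$'s is not definable. So ``compatibility along the tower'' handles $p^s$ but not $\Alb^s(f)$ for maps between different varieties, and your reduction from $X$ to a resolution $\tilde X$ inherits the problem.

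What is missing is a canonical, functorial transversal adapted to the mixed Hodge structure, together with a proof that the nilpotent orbit stays in boundedly many translates of $\Omega$.
\begin{itemize}
\item \textbf{Case $s=1$.} You can take $L=\pi(W_{-2}\g^1_{\R})$. Since $X$ is normal, $W_{-2}\g^1$ is of Tate type, so $L\cap Q=0$. It also contains the monodromy logarithms $N_j$, so $i\log|z_j|N_j$ moves only along $iL$ and there are no cross terms.
\item \textbf{Case $s\ge 2$.} This choice breaks down, because $W_{-2}\g^s$ can have non-Tate pieces (brackets of two classes of type $(0,-1)$ have type $(0,-2)$), so you need a finer canonical splitting. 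More seriously, you must show that $\exp(\sum_j\frac{\log z_j}{2\pi i}N_j)\psi(z)=g(z)\cdot t(z)$ with $t(z)\in T$ and $g(z)$ in a bounded subset of $\G^s_{\R}(X)$. The Baker--Campbell--Hausdorff cross terms between $i\log|z_j|N_j$ and $\psi(z)$ grow polynomially in $\log|z_j|$. You have to prove that their real parts stay bounded, i.e.\ that all unbounded contributions point along $T$.
\end{itemize}
This is exactly the asymptotic input supplied by the theory of admissible variations of mixed Hodge structure and the definability of mixed period maps. One possible way to bring it in is the factorisation of a mixed period map through $\alb^s$ recalled in the proof of Proposition \ref{horizontality prop}.
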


\begin{prop}\label{closure def prop}
Let $X$ be normal algebraic variety. Let $Y=\overline{\alb^s(X)}$ be the closure of the image in $\Alb^s(X)$. Then  $Y$ is a closed complex analytic subvariety of $\Alb^s(X)$ of the same dimension as $\alb^s(X)$.
\end{prop}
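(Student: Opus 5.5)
The plan is to apply the definable Chow theorem of Peterzil–Starchenko, in the form developed in \cite{BBT}, inside the definable analytic space $\Alb^s(X)$, and then use the fact that closure does not raise the dimension of a definable set. By Theorem \ref{definability thrm}, $\Alb^s(X)$ is an $\R_{\alg}$-definable complex manifold, hence a fortiori $\R_{\an,\exp}$-definable, and $\alb^s \colon X^{\odef} \to \Alb^s(X)$ is an $\R_{\an,\exp}$-definable holomorphic map. The image $Z:=\alb^s(X)$ is therefore an $\R_{\an,\exp}$-definable subset of $\Alb^s(X)$. Since $\R_{\an,\exp}$ is o-minimal, the topological closure $Y=\overline{Z}$ is again definable, and
\[
\dim_{\R}(Y\setminus Z)<\dim_{\R} Z ,
\]
so that $\dim_{\R} Y=\dim_{\R} Z$.

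The key step is to show that $Y$ is a closed complex analytic subvariety. I would invoke the definable proper mapping theorem of \cite{BBT}: the image of a definable holomorphic map between definable complex analytic spaces is a definable constructible set, that is, a finite Boolean combination of definable closed analytic subsets. The map $\alb^s$ need not be proper, so I would first compactify the source definably. Choose a projective compactification $\bar X$ of $X$, and consider the closure $\bar\Gamma$ of the graph of $\alb^s$ inside $\bar X^{\odef}\times \Alb^s(X)$. This closure is definable. Its intersection with $X\times\Alb^s(X)$ is the graph itself, which is closed there and analytic. Applying the Peterzil–Starchenko removable singularity theorem along $(\bar X\setminus X)\times \Alb^s(X)$, I would argue that $\bar\Gamma$ is a closed analytic subset of $\bar X\times\Alb^s(X)$. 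The tool here is the theorem that the closure of a definable analytic set of pure dimension $d$ is analytic, provided the boundary has real dimension less than $2d$, which holds by the o-minimal dimension inequality above. The projection $\bar\Gamma\to\Alb^s(X)$ is proper because $\bar X$ is compact. By Remmert's proper mapping theorem its image is then a closed analytic subset containing $Z$ and contained in $Y$, so it equals $Y$.

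For the dimension count, note that $\dim_{\C}$ of the analytic set $Y$ equals half of $\dim_{\R}$ for definable analytic sets. Combined with the equality $\dim_{\R} Y=\dim_{\R} Z$, this gives $\dim Y=\dim\alb^s(X)$, where the dimension of the image $Z$ is understood as its o-minimal dimension, or equivalently as the generic rank of $\alb^s$.

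I expect the main obstacle to be the extension of the graph across the boundary. If the boundary estimate is delicate there, the fallback is to cite directly the definable Chow / Remmert–Stein type result of \cite{BBT}: a definable closed subset of a definable complex manifold which is analytic off a definable set of smaller dimension is analytic. This result applies directly to $Y$, since $Y$ is definable and its non-analytic locus lies in $Y\setminus Z$ together with the singular part of $Z$, both of which have lower dimension.
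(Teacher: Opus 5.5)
Your main line is the paper's argument. The paper only cites Theorem~\ref{definability thrm}, the Peterzil--Starchenko definable Remmert--Stein theorem \cite[Theorem 4.13]{PS} and o-minimal dimension theory. Your route (close up the graph in $\bar X\times\Alb^s(X)$, then apply Remmert's proper mapping theorem to the projection) is the natural way to assemble exactly these ingredients, and your dimension count is fine. However, the extension theorem you actually state is false, so the key step is justified incorrectly.

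The claim ``the closure of a definable analytic set of pure dimension $d$ is analytic provided its frontier has real dimension $<2d$'' fails for $A=\{(x,0):|x|<1\}\subset\C^2$.
\begin{itemize}
\item $A$ is semialgebraic.
\item $A$ is a closed analytic subset, of pure dimension $1$, of $\C^2\setminus S$, where $S=\{(x,0):|x|=1\}$ is its frontier and has real dimension $1<2$.
\item Yet $\overline{A}$ is a closed disc, which is not analytic.
\end{itemize}
In particular, the o-minimal inequality $\dim_{\R}(\overline{\Gamma}\setminus\Gamma)<2\dim\Gamma$ gives you nothing, since it holds for every definable set. What the definable Remmert--Stein theorem requires is that the extension be across a definable closed \emph{complex analytic} subset $E$; a Shiffman-type variant would instead need $\dim_{\R}E\le 2d-2$.

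Your set-up does satisfy the correct hypothesis, for a reason you do not state. $\Gamma$ is closed in $X\times\Alb^s(X)$, so $\overline{\Gamma}\setminus\Gamma\subseteq(\bar X\setminus X)\times\Alb^s(X)$, which is a definable closed analytic subset. With the theorem cited in this form the argument goes through. This is precisely the point where the algebraicity of $X$ is used.

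Your fallback has no such input and is wrong. The closed disc $\overline{A}$ is a definable closed subset of $\C^2$ that is analytic off a definable subset of smaller dimension. It is also the closure of the image of the definable holomorphic map $\{|x|<1\}\to\C^2$, $x\mapsto(x,0)$. So an argument that never uses a compactification of the source with analytic boundary cannot work.

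The same example shows that the ``definable proper mapping theorem'' as you quote it, without a properness hypothesis, is false: the open disc is not a finite Boolean combination of closed analytic subsets of $\C^2$. Your main argument does not depend on it, since after compactifying you only use the classical proper mapping theorem.
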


Proposition \ref{closure def prop} follows from Theorem \ref{definability thrm} and Peterzil-Starchenko's version of definable Remmert-Stein Theorem, \cite[Theorem 4.13]{PS}. The fact that $\dim Y=\dim \alb^s(X)$ is a consequence of dimension theory of definable sets, \cite[Chapter IV]{VdD}

Higher Albanese maps are subject to one more restriction that comes from Griffiths transversality.

\begin{prop}\label{horizontality prop}
Let $\widetilde{\Alb^s(X)}=\G^s_{\C}(X)/F^0\G^s(X)$ be the universal cover of $\Alb^s(X)$. Consider the left-invariant subbundle $\widetilde{\mathcal{F}}$  of the tangent bundle of $\widetilde{\Alb^s(X)}$ induced by the subspace $F^1\g^s/F^0\g^s \subseteq \g^s_{\C}/F^0\g^s$. It descends to a holomorphic subbundle $\mathcal{F} \subseteq T\Alb^s(X)$. Then the image of $\alb^s$ is tangent to $\mathcal{F}$. In particular, 
\[
\dim \alb^s(X) \le \rk \mathcal{F}=\dim F^1\g^s-\dim F^0\g^s.
\]
\end{prop}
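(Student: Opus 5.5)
The plan is to work on the universal cover $\widetilde{\Alb^s(X)}=\G^s_{\C}(X)/F^0\G^s(X)$ and use that $\alb^s$ lifts to a holomorphic map $\widetilde{\alb^s}\colon \widetilde{X}\to \widetilde{\Alb^s(X)}$ from the universal cover of the smooth locus (or of a resolution; the statement is local, so it suffices to work near smooth points of $X$ and use that these are dense). The lift is equivariant with respect to $\pi_1(X)\to \G^s_{\Z}(X)$. Since $\widetilde{\mathcal F}$ is defined by the left-invariant subspace $F^1\g^s/F^0\g^s$, it is preserved by the left action of $\G^s_{\C}(X)$, and in particular by that of $\G^s_{\Z}(X)$. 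This is why it descends to $\mathcal F\subseteq T\Alb^s(X)$. That $\widetilde{\mathcal F}$ is a well-defined holomorphic subbundle follows from the identity $[F^0\g^s,F^1\g^s]\subseteq F^1\g^s$, which comes from compatibility of the bracket with $F^\bullet$. Because of this identity, the adjoint action of $F^0\G^s$ on $\g^s_{\C}/F^0\g^s$ preserves $F^1\g^s/F^0\g^s$, and so the subspace at the base point transports consistently to every point.

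The key step is horizontality of the lift. For this I would recall Hain--Zucker's construction of $\alb^s$. The lift is obtained from the flat connection on the trivial bundle $X\times\G^s_{\C}$ given by a $\g^s$-valued $1$-form $\omega$ built from iterated integrals; for $s=1$ this is just $\int\omega$ for holomorphic/log forms. Concretely, $\widetilde{\alb^s}(\tilde x)$ is the class of the parallel transport $T(\tilde x)\in\G^s_{\C}$ modulo $F^0\G^s$. The derivative is then $T^{-1}dT=\omega$, so the left-translated differential of $\widetilde{\alb^s}$ at any point lies in the image of $\omega$ modulo $F^0$. The MHS on $\g^s$ is constructed so that the connection form can be chosen in $F^{-1}$ relative to the Hodge filtration on $\Omega^\bullet$. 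With the convention here this means the holomorphic part of $\omega$ takes values in $F^1\g^s$ modulo $F^0\g^s$ in the indexing used by the paper. This is exactly Griffiths transversality for the variation of MHS given by the unipotent local system of $\G^s$-torsors. I expect this identification of the filtration index (the shift between Hain's $F^{-1}$ and the paper's $F^1$, and checking that the antiholomorphic part dies in the quotient) to be the main obstacle. I would handle it by induction on $s$, using the central extensions $p^s$ with fibres $C^s$. For $s=1$ it is classical: the Albanese map is tangent to everything, since $F^1\g^1=\g^1$ in the paper's normalization. For the inductive step, the new component of $\omega$ in $\z^s$ comes from iterated integrals of length $s$ of holomorphic log forms, which lie in the appropriate Hodge piece.

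Granting tangency of $\widetilde{\alb^s}$ to $\widetilde{\mathcal F}$ on the dense smooth locus, $\alb^s$ is tangent to $\mathcal F$ there. The image $\alb^s(X)$ is a definable set, by Theorem \ref{definability thrm}. Its dimension is attained on the image of the smooth locus at points where $\alb^s$ has maximal rank, and at those points the tangent space of the image is contained in $\mathcal F$. Hence $\dim\alb^s(X)\le\rk\mathcal F=\dim F^1\g^s-\dim F^0\g^s$.
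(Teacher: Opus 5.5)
Your proposal is correct, but it takes a different route from the paper.

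The paper never differentiates the iterated-integral formula. It invokes Hain--Zucker's result that $\alb^s$, followed by a map $\Psi\colon\Alb^s(X)\to M$ into a mixed Hodge classifying space, is the period map $\Phi_{\V}$ of an admissible $\Z$-VMHS. It then uses \cite{Rog} for the fact that $\Psi$ is finite onto its image and carries $\mathcal{F}$ to the horizontal distribution of $M$. Finally it quotes Griffiths transversality for $\Phi_{\V}$.

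You instead prove transversality by hand on the universal cover. From $T^{-1}dT=\omega$ and the fact that Hain's connection form lies in $F^0\bigl(E^1(\log D)\otimes\g^s\bigr)$ for the tensor-product filtration (log forms on a good compactification), two things follow. The $(0,1)$-part of $\omega$ takes values in $F^0\g^s$, which is holomorphicity. The $(1,0)$-part takes values in the paper's $F^1\g^s$ (the standard $F^{-1}\g^s$), which is tangency to $\widetilde{\mathcal{F}}$. This is essentially the proof of Griffiths transversality for the canonical unipotent variation, specialised to this case.

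Your route is more self-contained and avoids comparing $\mathcal{F}$ with the horizontal distribution through $\Psi$. The paper's route is shorter given the citations, and it presents the statement as an instance of Griffiths transversality. For normal $X$, run your computation on $X_{\mathrm{reg}}$ and push forward along the MHS morphism $\g^s(X_{\mathrm{reg}})\to\g^s(X)$, which respects $F^{\bullet}$. Your dimension bound via density of $X_{\mathrm{reg}}$ and definable dimension theory is fine.

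One correction: the induction on $s$ you propose for the ``main obstacle'' is unnecessary and misdescribes the mechanism. The $\z^s$-component of $\omega$ is not an iterated integral of holomorphic log forms. It is a $1$-form whose differential is determined by wedge products of lower components. Those lower components already include antiholomorphic forms paired with directions in $F^0\g^1$, so the $\z^s$-component generally has both $(1,0)$- and $(0,1)$-parts. The single statement $\omega\in F^0(E^1(\log D)\otimes\g^s)$ is what you should invoke, and it handles both parts at once.
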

\begin{proof}
There exists an admissible polarised $\Z$-variation of mixed Hodge structures $\mathbb{V}$ on $X$ such that the period map $\Phi_{\mathbb{V}} \colon X \to M$ factors as
\[
\xymatrix{
X \ar[rr]^{\Phi_{\mathbb{V}}} \ar[rd]_{\alb^s} && M\\
& \Alb^s(X) \ar[ur]_{\Psi}
}
\]
where $M$ is a certain mixed Hodge variety (\cite[Corollary 5.20]{HZ}, cf. \cite[Theorem 6.1]{Rog}). Moreover, the map $\Psi$ is a finite cover on its image and it sends $\mathcal{F}$ to the horizontal distribution on $M$ (\cite[Proposition 6.2.]{Rog}). At the same time, $\Phi_{\mathbb{V}}$ is subject to Griffiths transversality.
\end{proof}

Finally, let us have a look on the topology of higher Albanese maps.

\begin{prop}\label{cohomology prop}
For a topological space $S$ let $\tau_S \colon H^{\bullet}(\pi_1(S), \Z) \to H^{\bullet}(S, \Z)$ denote the natural morphism induced by the map $S \to K(\pi_1(S), \Z)$. Let $\mu^s_{\Z} \colon \pi_1(X) \to \G^s_{\Z}(X)$ be the natural epimorphism, which is restriction $\mu^s_{\Q}$ on its image. There is a commutative diagram of morphisms of graded rings
\[
\xymatrix{
H^{\bullet}(\G^s_{\Z}(X), \Z) \ar[d]^{\tau_{\Alb^s(X)}} \ar[rr]^{(\mu^s_{\Z})^*} && H^{\bullet}(\pi_1(X), \Z) \ar[d]^{\tau_X}  \\
H^{\bullet}(\Alb^s(X), \Z) \ar[rr]_{(\alb^s)^*}&& H^{\bullet}(X,\Z)
}
\]
\end{prop}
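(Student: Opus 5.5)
The plan is to derive the square from naturality of the classifying map $S \to K(\pi_1(S),1)$. Applying this to the continuous map $\alb^s \colon X^{\an} \to \Alb^s(X)$ gives a homotopy-commutative square
\[
\xymatrix{
X \ar[r]^{\alb^s} \ar[d] & \Alb^s(X) \ar[d] \\
K(\pi_1(X),1) \ar[r]_{B(\alb^s)_*} & K(\pi_1(\Alb^s(X)),1),
}
\]
in which the bottom arrow is the map of Eilenberg--MacLane spaces induced by the group homomorphism $(\alb^s)_* \colon \pi_1(X) \to \pi_1(\Alb^s(X))$. I will first recall why this square commutes up to homotopy. Maps from a CW complex (or a space of its homotopy type) into a $K(G,1)$ are classified, up to free homotopy, by homomorphisms $\pi_1 \to G$ modulo conjugation. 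Both composites around the square induce the same homomorphism on $\pi_1$, namely $(\alb^s)_*$. The space $X^{\an}$ is triangulable, being a complex algebraic variety, so this classification applies and the two composites are homotopic.

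Next, I apply $H^{\bullet}(-,\Z)$, which is contravariant. This gives $(\alb^s)^* \circ \tau_{\Alb^s(X)} = \tau_X \circ (B(\alb^s)_*)^*$. Since the square came from continuous maps, all arrows are pullbacks along maps and are therefore morphisms of graded rings.

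It remains to identify the left and top arrows with those in the statement. As recalled above, $\Alb^s(X)$ is aspherical with fundamental group $\G^s_{\Z}(X)$. Hence $\Alb^s(X)$ is itself a model of $K(\G^s_{\Z}(X),1)$, and $\tau_{\Alb^s(X)}$ is an isomorphism $H^{\bullet}(\G^s_{\Z}(X),\Z) \to H^{\bullet}(\Alb^s(X),\Z)$. In particular it is a well-defined ring morphism. By Theorem \ref{maps exist thrm}, the homomorphism $(\alb^s)_*$ coincides with the natural projection $\pi_1(X) \to \G^s_{\Z}(X)$, which is $\mu^s_{\Z}$. Therefore the bottom arrow of the square is $B\mu^s_{\Z}$, and on cohomology it induces the group-cohomology map $(\mu^s_{\Z})^*$.

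The only delicate point is base points. The identification $\pi_1(\Alb^s(X)) = \G^s_{\Z}(X)$ depends on the marked points $x$ and $\alb^s(x)$, so I will fix them throughout. Changing base points only alters the homomorphism by an inner automorphism, and inner automorphisms act trivially on group cohomology. So the maps on cohomology are well defined and the diagram commutes.
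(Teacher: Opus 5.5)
Your proof is correct and takes the same route as the paper. The paper just cites the asphericity of $\Alb^s(X)$ (via Proposition \ref{albr homotopy equiv prop}) and the identification $(\alb^s)_*=\mu^s_{\Z}$ from Theorem \ref{maps exist thrm}; you have written out the naturality of classifying maps and the base-point bookkeeping that the paper leaves implicit.
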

\begin{proof}
Follows from and Proposition \ref{albr homotopy equiv prop} and  Theorem \ref{maps exist thrm}.
\end{proof}

\section{Main Theorem}
\subsection{q-completeness}

Let $M$ be a complex manifold of $\dim_{\C}M=n$ and $\phi \colon M \to \R$ a smooth function. Recall that $\phi$ is called \emph{(strictly) $q$-pseudoconvex} if its Levi form $\i\di\dibar\phi$ has at least $n-q$ non-negative (strictly positive) eigenvalues at each point\footnote{In some literature, including the  seminal paper \cite{AG}, different convention is used and such function would be called (strictly) $q+1$-pseudoconvex}. 

One says that $M$ is \emph{$q$-complete} if it admits an exhausting strictly $q$-pseudoconvex function. 

If $\phi$ is a (strictly) $q$-pseudoconvex function and $N \subseteq M$ a closed holomorphic submanifold then $\phi|_N$ is (strictly) $q$-pseudoconvex. This motivates the following definition.

\begin{df}
Let $S$ be a complex analytic space. A continuous function $\phi \colon S \to \R$ is \emph{(strictly) $q$-pseudoconvex} if for each point $s \in S$ there exists an open neighbourhood $U(s) \subseteq S$, a closed holomorphic embedding $U(s) \hookrightarrow \mathbb{B}^N_{\C}$ into a unit ball and a smooth (strictly) $q$-pseudoconvex function $\widetilde{\phi} \colon \mathbb{B}_{\C}^N \to \R$ such that $\widetilde{\phi}|_{U(s)}=\phi$. A space $S$ is \emph{$q$-complete} if it admits an exhausting strictly $q$-pseudoconvex function. 
\end{df}

Remark that if $M$ is a $q$-complete complex manifold and $S \subseteq M$ is a closed complex analytic subvariety, it is automatically $q$-complete (the property of being exhausting is preserved by restriction on a closed subset). 

A space is $0$-complete if and only if it is Stein. A compact analytic space  $S$ is $q$-complete for $q \ge \dim_{\C}S$.

We will use the following two results on $q$-complete spaces. The first one is a theorem of Hamm that connects $q$-completeness to the homotopy type (\cite{Hamm}).

\begin{thrm}[Hamm]\label{Hamm thrm}
Let $S$ be a complex analytic space, $\dim_{\C} S=n$. Suppose that $S$ is $q$-complete. Then $S$ has homotopy type of a CW-complex of dimension at most $n+q$. In particular, $H^k(S, \Z)=0$ for $k>n+q$.
\end{thrm}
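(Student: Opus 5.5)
The plan is to use Morse theory applied to the exhausting strictly $q$-pseudoconvex function $\phi$. I would first do the case where $S$ is a manifold (the Andreotti--Frankel argument), and then extend it to singular spaces with stratified Morse theory. The cohomological statement follows at once from the CW bound: a CW-complex of dimension at most $n+q$ has vanishing cellular cochains in degrees $>n+q$.

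\emph{Smooth case.} Let $S=M$ be a complex manifold of dimension $n$ with exhausting strictly $q$-pseudoconvex $\phi$.
\begin{itemize}
\item Strict $q$-pseudoconvexity is an open condition in the $C^2$ topology. So I can perturb $\phi$, by a $C^2$-small amount on compacts, into an exhausting Morse function that is still strictly $q$-pseudoconvex.
\item Since $\phi$ is exhausting, the sublevel sets $\{\phi\le c\}$ are compact. Standard Morse theory then gives that $M$ has the homotopy type of a CW-complex with one cell of dimension $\operatorname{ind}_p\phi$ for each critical point $p$.
\item It remains to bound the index. Suppose the real Hessian $H$ of $\phi$ at a critical point $p$ is negative definite on a real subspace $W\subseteq T_pM$ with $\dim_\R W>n+q$.
\item Then $W\cap JW$ is a complex subspace of real dimension at least $2\dim W-2n>2q$, so its complex dimension is $>q$.
\item At a critical point the Levi form satisfies $L(v)=\tfrac14\big(H(v)+H(Jv)\big)$. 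Hence $L$ is negative definite on $W\cap JW$.
\item But $L$ has at least $n-q$ strictly positive eigenvalues, so any subspace on which $L$ is negative semidefinite has complex dimension at most $q$. This is a contradiction, so every index is at most $n+q$.
\end{itemize}

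\emph{Singular case.} I would fix a Whitney stratification of $S$ by complex strata $S_\alpha$ and use Goresky--MacPherson stratified Morse theory.
\begin{itemize}
\item Using the local extensions $\widetilde\phi$ on balls $\mathbb B^N$ from the definition, perturb $\phi$ to a stratified Morse function that remains strictly $q$-pseudoconvex in the local ambient sense and remains exhausting.
\item At a critical point $p$ in a stratum of complex dimension $d$, the Morse data is the product of tangential and normal Morse data.
\item The restriction to the stratum is strictly $q$-pseudoconvex on a $d$-manifold. By the smooth argument, the tangential index is at most $d+q$.
\item The normal Morse data is the pair (cone on the complex link, complex link) for the normal slice, which has complex dimension $n-d$ at most. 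By the Hamm--L\^e theory of complex links, the complex link of an $(n-d)$-dimensional germ has the homotopy type of a CW-complex of dimension $\le n-d-1$. So the normal data attaches cells of dimension $\le n-d$.
\item Gluing these pieces gives cells of dimension at most $(d+q)+(n-d)=n+q$, and we conclude as before.
\end{itemize}

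The main obstacle is the singular case, in two places. First, one must check that the perturbation to a stratified Morse function can be done while keeping strict $q$-pseudoconvexity; this is handled using the local extensions $\widetilde\phi$ and openness of the condition. Second, the normal Morse data must be controlled. For this I would invoke the known connectivity and dimension bound for complex links, or alternatively Hamm's original inductive argument over the strata, rather than re-derive it.
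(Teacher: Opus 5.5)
\textbf{Comparison with the paper.} The paper does not prove this statement. It imports it as a black box from Hamm, after a shift of conventions: by the footnote, the paper's ``$q$-complete'' is Andreotti--Grauert's and Hamm's ``$(q+1)$-complete'', and Hamm's bound $n+(q+1)-1$ is exactly $n+q$. Your proposal is therefore a genuinely different thing, namely a reconstruction of the proof. It is correct in outline and follows the standard Morse-theoretic route: an Andreotti--Frankel index estimate on each stratum, Goresky--MacPherson's splitting of Morse data into tangential and normal parts, and the complex link controlling the normal part.

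\textbf{What your argument gets right.} Your index estimate is correct. If $\dim_\R W\ge n+q+1$, then $W\cap JW$ has complex dimension at least $q+1$ and the Levi form is negative definite on it. That is impossible when at most $q$ eigenvalues are non-positive. This also confirms independently that the paper states the theorem with the correct shift of $q$. The count $(d+q)+(n-d)=n+q$ shows where the bound comes from, which the citation hides.

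\textbf{What the citation buys.} The technical points you only sketch are settled in the literature. Three of them need more than a sentence for a complete proof.
\begin{enumerate}
\item You must produce an exhausting function that is simultaneously a stratified Morse function and strictly $q$-pseudoconvex in the local-extension sense of the paper's definition. This is a patching argument over the charts $U(s)\hookrightarrow\mathbb{B}^N$, using that both conditions are open on compacts.
\item Goresky--MacPherson's theory is set up for Whitney-stratified closed subsets of an ambient manifold, with Morse functions that are ambient-smooth. The definition here only gives local embeddings. You must either embed $S$ globally or check that the ingredients (local Morse data, the isotopy lemma between critical values) survive in this setting.
\item The CW-dimension bound for the complex link is itself the Stein case of the theorem in dimension at most $n-d-1<n$. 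Unless you quote Hamm's earlier Stein theorem or L\^e's results, run the whole argument as an induction on $n$ to avoid circularity.
\end{enumerate}
Finally, with infinitely many critical points, the successive attachments are only defined up to homotopy in the singular case. Assembling them into a single CW complex of dimension at most $n+q$ needs the usual colimit and Whitehead argument, as in Milnor's treatment of the smooth case.
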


The second is a theorem of Takeuchi on $q$-completeness of holomorphic principal bundles (\cite[Theorem 36]{Tak}).

\begin{thrm}[Takeuchi]\label{Takeuchi thrm}
Let $B$ be a complex manifold and $G$ a connected complex Lie group. Let $P \to B$ be a holomorphic principal $G$-bundle. Suppose that $B$ is $r$-complete and $G$ is $q$-complete. Then $P$ is $(r+q)$-complete.
\end{thrm}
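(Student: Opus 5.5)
Since Theorem \ref{Takeuchi thrm} is quoted from \cite{Tak}, I only sketch how I would reprove it. The idea is to build an exhausting strictly $(r+q)$-pseudoconvex function on $P$ out of two pieces: a strictly $r$-pseudoconvex exhaustion $\phi$ of $B$, and a strictly $q$-pseudoconvex exhaustion $\psi$ of $G$. The second is transported to the fibres through local trivialisations.

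\textbf{Step 1 (local model).} Fix a locally finite cover $\{U_i\}$ of $B$ by relatively compact open sets with holomorphic sections $\sigma_i \colon U_i \to P$. These give trivialisations $P|_{U_i} \simeq U_i \times G$, $p \mapsto (\pi(p), g_i(p))$ with $p=\sigma_i(\pi(p))\cdot g_i(p)$. Set $\psi_i:=\psi\circ g_i$ on $P|_{U_i}$. Let $n=\dim B+\dim G$. At each point, the Levi form of $\psi_i$ is positive on a subspace $W$ of the vertical tangent space with $\dim W \ge \dim G - q$. The Levi form of $\phi\circ\pi$ is positive on a subspace $H$ with $\dim H\ge \dim B-r$, chosen complementary to the vertical space. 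Consider
\[
\lambda(\phi\circ\pi)+\psi_i ,\qquad \lambda \gg 0 .
\]
On $W\oplus H$, the cross terms of the Levi form can be absorbed by Cauchy--Schwarz once $\lambda$ is large. This uses that $\phi\circ\pi$ has zero Levi form in vertical directions, and that the mixed terms are bounded on compact sets. So the function is strictly $(r+q)$-pseudoconvex on compact subsets of $P|_{U_i}$, since $n-(r+q)=(\dim B - r)+(\dim G-q)$. Because $\lambda$ must grow with the compact set, I would replace $\lambda\phi$ by $\chi\circ\phi$ with $\chi$ convex, increasing and growing fast enough.

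\textbf{Step 2 (gluing), the main obstacle.} On overlaps the functions $\psi_i$ and $\psi_j$ differ by the left translation $g\mapsto g_{ij}(b)g$. The positive subspaces $W$ for different $i$ need not agree. Hence a partition-of-unity sum $\sum\rho_i\psi_i$ is \emph{not} $q$-pseudoconvex in general, because sums of $q$-pseudoconvex functions lose convexity for $q>0$.

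I would therefore glue by regularised maxima instead. The maximum of strictly $q$-pseudoconvex functions is $q$-pseudoconvex in the continuous sense, and the Diederich--Fornæss smoothing turns it into a smooth one up to a slight increase of $q$ that can be avoided with care. Concretely, I would take
\[
\Phi_i=\psi_i+\chi\circ\phi\circ\pi - c\,\eta_i\circ\pi,
\]
where $\eta_i$ is an exhaustion of $U_i$ blowing up at $\partial U_i$. This forces $\Phi_i$ to be dominated near $\partial U_i$ by some $\Phi_j$ with a neighbourhood of that boundary point inside $U_j$. Then set $\Phi=\widetilde{\max}_i\Phi_i$. Verifying the domination on boundaries uses that the transition functions $g_{ij}$ take values in compact subsets of $G$ over relatively compact sets. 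It also uses that $\psi(hg)-\psi(g)$ is controlled for $h$ in a compact set, after possibly modifying $\psi$, e.g. by composing it with a fast convex function.

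\textbf{Step 3 (exhaustion).} $\Phi$ is exhausting because $\phi\circ\pi$ exhausts in base directions. Along fibres, each $\psi_i$ is an exhaustion of $G$, and the correction terms are bounded over compact subsets of $B$. Strict $(r+q)$-pseudoconvexity follows from Step 1 applied to each $\Phi_i$ together with the stability of $q$-pseudoconvexity under regularised maximum. The delicate point is Step 2. If the general gluing fails, I would fall back on the case actually needed in the paper, namely commutative $G=C^s$. There I would try to choose $\psi$ invariant under translations by the compact real part of $C^s$, so that only a controlled non-compact part of the transitions has to be handled.
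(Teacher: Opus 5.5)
The paper does not prove Theorem~\ref{Takeuchi thrm}; it quotes it from \cite[Theorem 36]{Tak}. Your sketch therefore has to stand on its own, and it does not: Step~2 fails as written. Step~1 is fine. In the trivialisation $P|_{U_i}\simeq U_i\times G$ the two summands depend on separate variables, so there are no cross terms and no need for $\lambda\gg0$.

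The regularised maximum does not evade the obstruction you identified for partitions of unity. The usual regularised maximum of two variables has the form $M(t,s)=s+m(t-s)$ with $m$ convex and $0\le m'\le 1$. Hence
\[
\partial\bar\partial M(\Phi_i,\Phi_j)=m'\,\partial\bar\partial\Phi_i+(1-m')\,\partial\bar\partial\Phi_j+m''\,\partial(\Phi_i-\Phi_j)\wedge\bar\partial(\Phi_i-\Phi_j),
\]
which is a convex combination of the two Levi forms plus a semipositive term of rank at most one. Over $b\in U_i\cap U_j$, in the fibre coordinate $g_i$, the functions $\Phi_i$ and $\Phi_j$ restrict (up to constants) to $\psi$ and $\psi\circ L_{g_{ji}(b)}$. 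Their positive subspaces differ in general. Nothing prevents the convex combination from having fewer than $\dim G-q$ positive eigenvalues, and the rank-one term restores at most one.

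This is exactly the mechanism behind the loss in the Diederich--Forn\ae ss smoothing of $q$-convex functions with corners. That smoothing only yields $\tilde q$-convexity with $\tilde q=n-[n/q]+1$ in the Andreotti--Grauert normalisation, which is not a slight increase. Your argument contains nothing that removes this loss, and with it you do not get $(r+q)$-completeness. Note also that a maximum of smooth functions is not $q$-pseudoconvex in the sense of the paper's definition, only ``with corners''.

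There is a secondary problem. The domination along $\partial U_i$ needs an additive bound $\sup_g|\psi(hg)-\psi(g)|<\infty$ for $h$ in compact sets, i.e.\ at most linear growth of $\psi$ for a right-invariant metric. Composing with a fast convex function destroys such bounds rather than producing them.

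The missing idea is to avoid gluing local exhaustions at all, by choosing the fibre function compatibly with the transition functions. In the commutative case the paper actually uses (Lemma~\ref{completeness of alb}), this is easy, and it is what your fallback should say.

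\begin{enumerate}
\item Let $G=C$ be as in Proposition~\ref{abelian completeness}, with $q=\dim_{\mathbb{C}}F$ and $m=\dim_{\mathbb{C}}C-q$. Let $\mu\colon C\to iL\simeq\mathbb{R}^m$ be the homomorphism from its proof. It is proper (its kernel $\pi(\Lambda)\backslash R$ is compact), and it is locally the imaginary part of complex-linear coordinates, hence pluriharmonic.
\item Write $g_j=g_{ji}g_i$ on overlaps. The maps $\mu\circ g_{ji}$ form an additive cocycle, so $\mu\circ g_{ji}=c_j-c_i$ with smooth $c_i\colon U_i\to\mathbb{R}^m$ (partition of unity).
\item Then $v:=\mu\circ g_i-c_i\circ\pi$ is a global map $P\to\mathbb{R}^m$ with $\partial\bar\partial v=-\pi^*\partial\bar\partial c_i$. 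The forms $\partial\bar\partial c_i$ glue on $B$, since $c_j-c_i$ is pluriharmonic.
\item Take $f(x)=\sqrt{1+|x|^2}$, which is strictly convex with $|\nabla f|\le 1$, and set $f_k=\partial f/\partial x_k$. The Levi form of $f\circ v$ is $\mathrm{Hess}f(v)[\partial v,\overline{\partial v}]-\sum_k f_k(v)\,\pi^*\partial\bar\partial c_{i,k}$.
\item The first term is semipositive and positive definite on the $m$-dimensional vertical $L^c$-directions. The second is pulled back from $B$ and is bounded over compact subsets of $B$, uniformly along the fibres.
\item Hence $f\circ v+\chi\circ\phi\circ\pi$, with $\chi$ convex increasing and $\chi'$ large, is positive definite on those vertical directions plus a lift of a $(\dim B-r)$-dimensional positive subspace of the Levi form of $\phi$. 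All vertical--horizontal cross terms lie in the semipositive part, so no Cauchy--Schwarz is needed.
\item This function is exhausting because $\mu$ and $\chi\circ\phi$ are proper.
\end{enumerate}

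This gives $(r+q)$-completeness with $q=\dim_{\mathbb{C}}F$, which is all Lemma~\ref{completeness of alb} uses. For non-commutative $G$, your sketch does not reach Takeuchi's theorem.
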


\begin{rmk} In general it is not true that $q$-completeness is additive in holomorphic fiber bundles. A Serre's conjecture saying that a fiber bundle with Stein base and Stein fiber is Stein was disproved by Skoda in \cite{Skoda}.
\end{rmk}

\subsection{$q$-completeness of higher Albanese manifolds}

Now we are going to prove the following lemma.

\begin{lemma}\label{completeness of alb}
Let $X$ be a normal complex algebraic variety and $s>0$. Let $q=\dim F^0\g^s$. Then $\Alb^s(X)$ is $q$-complete.
\end{lemma}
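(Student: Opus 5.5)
The plan is to argue by induction on $s$ along the tower (\ref{diagram}) of holomorphic principal bundles $p^s\colon \Alb^s(X)\to\Alb^{s-1}(X)$ with structure group $C^s$. Takeuchi's Theorem \ref{Takeuchi thrm} then reduces everything to two facts:
\begin{enumerate}
\item the commutative group $C^s=\mathcal{Z}^s_{\Z}(X)\backslash \mathcal{Z}^s_{\C}(X)/F^0\mathcal{Z}^s$ is $q_s$-complete with $q_s=\dim F^0\z^s$;
\item the dimensions add up, $\dim F^0\g^s=\dim F^0\g^{s-1}+\dim F^0\z^s$.
\end{enumerate}
The base case $s=1$ is the statement for $C^1=\Alb^1(X)$ itself, which is the same commutative-group statement applied to $\g^1$. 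For the additivity, $0\to\z^s\to\g^s\to\g^{s-1}\to 0$ is an exact sequence of mixed Hodge structures, since the bracket and the projections are morphisms of MHS. By strictness of morphisms of mixed Hodge structures with respect to $F$, the sequence $0\to F^0\z^s\to F^0\g^s\to F^0\g^{s-1}\to 0$ is exact, which gives the formula. The induction step is then: $\Alb^{s-1}(X)$ is $\dim F^0\g^{s-1}$-complete, $C^s$ is $\dim F^0\z^s$-complete, so $\Alb^s(X)$ is $\dim F^0\g^s$-complete by Theorem \ref{Takeuchi thrm}.

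The main work is the commutative case, which I expect to be the real obstacle. Let $V=\z^s_{\C}$ with $n=\dim_{\C}V$, $f=\dim F^0\z^s$, real form $V_{\R}$, and lattice $\Lambda=\mathcal{Z}^s_{\Z}$, cocompact in $V_{\R}$. Set $W=V/F^0$, so $\dim_{\C}W=n-f$. Since the weights are negative, $V_{\R}\cap F^0=0$, and the image $E$ of $V_{\R}$ in $W$ is a real subspace of real dimension $n$ in which $\Lambda$ is a cocompact lattice. We also have $E+iE=W$, because $V_{\R}$ spans $V_{\C}$. Counting real dimensions gives $\dim_{\R}(E\cap iE)=2n-2(n-f)=2f$, so the maximal complex subspace $H:=E\cap iE$ has complex dimension $f$.

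Choose a complex complement $D$ of $H$ in $W$ such that $D\cap E=:D_{\R}$ is a totally real form of $D$; this is possible by taking a real complement of $H$ inside $E$. We have $\dim_{\C}D=n-2f$, and $W=H\oplus D$ with $E=H\oplus D_{\R}$. Define $\phi(w)=\|\operatorname{Im}_D(w)\|^2$, where $\operatorname{Im}_D$ is the imaginary part of the $D$-component with respect to the real form $D_{\R}$. This function is invariant under translations by $E\supseteq\Lambda$, so it descends to $C^s=\Lambda\backslash W$. Its Levi form is positive definite on the $D$-directions and zero on $H$, so it has at least $(n-f)-f$ strictly positive eigenvalues, i.e. it is strictly $f$-pseudoconvex in the paper's convention.

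It remains to check that $\phi$ is exhausting. As a real manifold, $C^s\cong (E/\Lambda)\times iD_{\R}$, where $E/\Lambda$ is a compact torus and $\phi$ is the squared norm on the second factor; hence $\phi$ is proper. If needed, one can add a small correction to make $\phi$ exhausting and strict, but the Levi-form count above already gives the required number of positive eigenvalues. This settles the commutative case, and with it the base case and the induction step, proving Lemma \ref{completeness of alb}.
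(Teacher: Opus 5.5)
Your proof is correct and follows essentially the same route as the paper. Like the paper, you induct along the principal $C^s$-bundles $p^s$ via Takeuchi's theorem, get additivity of $\dim F^0$ from strictness, and handle the abelian fibre with the squared imaginary part in directions complementary to the maximal complex subspace of the image of the real form. Your count ($\dim_{\C} D = n-2f$, hence $n-2f$ positive Levi eigenvalues on a space of dimension $n-f$) is the right one; the paper's proof of the abelian case writes $n-q$ in the corresponding places, which appears to be a slip for $n-2q$.
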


\begin{rmk}
In the case $s=1$ Lemma \ref{completeness of alb} can be illustrated by the following two extreme examples. If $F^0\g^1=0$, i.e. the mixed Hodge structure on $H_1(X,\Q)$ is of Tate type, then $\Alb^1(X) \simeq (\C^{\times})^k$ is Stein. On the opposite, if the Hodge structure on $H_1(X, \Q)$ is pure (e.g. $X$ is projective), then $\dim_{\C} F^0\g^1=\frac{1}{2}b_1(X)=\dim_{\C} \Alb^1(X)$ which is coherent with the fact that $\Alb^1(X)$ is a compact complex manifold.
\end{rmk}

Lemma \ref{completeness of alb} follows from Theorem \ref{Takeuchi thrm} and the following simple Proposition.

\begin{prop}\label{abelian completeness}
Let $V$ be a real vector space, $\dim V=n$, and $\Lambda \subset V$ a lattice in it. Let $V_{\C}=V \o \C$ and $F \subset V_{\C}$ be a complex subspace such that $F \cap \overline{F}=0$. Then $C:=\Lambda \backslash V_{\C}/F$ is $q$-complete, where $q=\dim_{\C} F$.
\end{prop}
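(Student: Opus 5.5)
The plan is to construct an explicit exhaustion function on $C$ whose Levi form is positive definite in exactly $n-2q=\dim_{\C}C-q$ directions. That is precisely strict $q$-pseudoconvexity, since $\dim_{\C}C=n-q$.

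First, I split $V_{\C}$ compatibly with complex conjugation. The subspace $F\oplus\overline{F}$ is conjugation-stable; it is a direct sum because $F\cap\overline{F}=0$. Hence it equals $U\otimes\C$ for a real subspace $U\subseteq V$ with $\dim_{\R}U=2q$. Choose a real complement $E_{\R}$ of $U$ in $V$, say the orthogonal complement for some inner product, and set $E=E_{\R}\otimes\C$. Then $V_{\C}=F\oplus\overline{F}\oplus E$ and $V_{\C}/F\cong \overline{F}\oplus E$ as complex vector spaces.

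Next, I compare with the real structure. The composition $U\hookrightarrow U\otimes\C=F\oplus\overline{F}\to \overline{F}$ is injective, because $U\cap F=0$ (as in the proof of Proposition \ref{albr homotopy equiv prop}: a real vector in $F$ lies in $F\cap\overline F$). Comparing real dimensions ($2q$ on both sides), it is an $\R$-linear isomorphism. Consequently the map
\[
V\oplus iE_{\R}\longrightarrow V_{\C}/F
\]
is a real-linear isomorphism, since its image contains $\overline{F}$ together with $E_{\R}+iE_{\R}=E$. Because $\Lambda\subset V$, this gives a diffeomorphism $C\cong (\Lambda\backslash V)\times iE_{\R}$, where $\Lambda\backslash V$ is a compact real torus.

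Now I define the exhaustion function. Pick a real basis of $E_{\R}$. The associated linear coordinates $z_1,\dots,z_{n-2q}$ on $E$, composed with the projection $V_{\C}\to E$ along $F\oplus\overline F$, vanish on $F$. They therefore give holomorphic linear functions on $V_{\C}/F$. These functions are real-valued on $V$, so their imaginary parts $y_j=\operatorname{Im} z_j$ are $\Lambda$-invariant. Set
\[
\phi=\sum_{j=1}^{n-2q} y_j^2 .
\]
This descends to a smooth function on $C$. Under the product decomposition above, $\phi$ is the squared norm on the $iE_{\R}$ factor. Since $\Lambda\backslash V$ is compact, every sublevel set $\{\phi\le c\}$ is compact, so $\phi$ is exhausting.

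Finally, I compute the Levi form. Since the $z_j$ are holomorphic, $\i\di\dibar\phi=\tfrac{1}{2}\sum_j \i\, dz_j\wedge d\bar z_j$. This form is positive semidefinite, and it is strictly positive on any complement of $\bigcap_j\ker dz_j$. The differentials $dz_j$ are linearly independent, so the form has exactly $n-2q=(n-q)-q$ strictly positive eigenvalues at every point, and hence $\phi$ is strictly $q$-pseudoconvex.

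The only delicate point is the second step: checking that the real structure is compatible, so that $\phi$ is both well defined on the quotient by $\Lambda$ and exhausting. This is exactly where $F\cap\overline{F}=0$ is used. With that in place, the rest is a direct computation.
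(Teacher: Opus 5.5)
Your proof is correct and is essentially the paper's argument. Your $U=(F\oplus\overline{F})\cap V$ maps under $V_{\C}\to V_{\C}/F$ onto the paper's maximal complex subspace $Q=R\cap iR$ of $R=\pi(V)$, and your $E_{\R}$ plays the role of the complement $L$. Likewise your splitting $V\oplus iE_{\R}$ is the paper's $R\oplus iL$, and $\phi$ is the same squared norm of the $iL$-component. Your count of $n-2q=\dim_{\C}C-q$ strictly positive Levi eigenvalues is the correct one; the paper's text writes $n-q$ at that point, which is a slip.
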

\begin{proof}

Let $\pi \colon V_{\C} \to V_{\C}/F$ be the projection. Since $F \cap \overline{F}=0$, the map $\pi$ is injective on $V$. Denote $R:=\pi(V)$. 

Let $Q:=R \cap iR$. This is the maximal complex subspace of $R$. We claim that $\dim_{\C}Q=q$. Indeed, the image of a vector $v \in V$ is contained in $Q$ if and only if
\[
iv=v'+f
\]
for some $v' \in V, \ f \in F$. Therefore $v=\operatorname{Re}(f)$. Since $F \cap \overline{F}=0$, the map 
\[
F \to V, \ f \mapsto \operatorname{Re}(f)
\]
is injective and its image projects isomorphically to $Q$.

Choose a direct complement $L$ of $Q$ in $R$ and let $L^c=L \oplus i L$. Observe that $L \cap iL= R \cap iL=0$ and 
\begin{equation}\label{real split}
V_{\C}/F=R \oplus iL
\end{equation}
as a real vector space. As a complex vector space, $V_{\C}/F$ admits a decomposition into a direct sum of two complex vector spaces
\begin{equation}\label{complex split}
V_{\C}/F= Q \oplus L^c.
\end{equation}
of dimensions $q$ and $n-q$ respectively.

Let $\widetilde{\mu} \colon V_{\C}/F \to iL$ be the linear projection along the decomposition (\ref{real split}). Set $\widetilde{\phi}:=||\widetilde{\mu}||^2$. In the linear coordinates adapted to the decomposition (\ref{complex split}) this function is written as
\[
\widetilde{\phi}(z_1, \ldots, z_q, w_1, \ldots, w_{n-q})= \sum_{j=1}^{n-q} (\operatorname{Im} w_j)^2.
\]
A direct computation verifies that $\i\di\dibar\widetilde{\phi}$ has exactly $(n-q)$ positive eigenvalues.

Identify the universal cover $\widetilde{C}$ of $C$ with $V_{\C}/F$. Since $\pi(\Lambda)$ is contained in $R$, the decomposition (\ref{real split}) descends to a splitting
\[
C\simeq (\pi(\Lambda) \backslash R) \times \R^{n-q},
\]
where the projection on the second factor is given by the descend $\mu$ of $\widetilde{\mu}$. The function $\widetilde{\phi}$ descends to a $q$-pseudoconvex function $\phi=||\mu||^2$ on $C$. Since $\pi(\Lambda)$ is a lattice in $R$, the factor $\pi(\Lambda) \backslash R$ is compact and $\phi$ is proper. Since $\phi$ is moreover non-negative, it is exhausting.
\end{proof}

\begin{proof}[Proof of Lemma \ref{completeness of alb}]
Argue by induction in $s$. Recall that $\Alb^s(X)$ is a principal holomorphic $C^s$-bundle over $\Alb^{s-1}(X)$ with
\[
C^s=(\G^s_{\Z}(X) \cap \mathcal{Z}^s) \backslash \mathcal{Z}^s_{\C}/F^0\mathcal{Z}^s.
\]
(this is true even for $s=1$ if one declares $\Alb^0(X)$ to be a point). By Proposition \ref{abelian completeness} the group $C^s$ is $(\dim_{\C} F^0\mathcal{Z}^s)$-complete. At the same time, $\g^s \to \g^{s-1}$ is a morphism of mixed Hodge structures, therefore $\dim_{\C}F^0\mathcal{Z}^s=\dim_{\C}F^0\g^s-\dim_{\C} F^0\g^{s-1}$. Takeuchi's theorem (Theorem \ref{Takeuchi thrm}) finishes the proof.
\end{proof}

\subsection{Proof of the main theorem}

\begin{thrm}\label{main thrm}
Let $X$ be a normal algebraic variety and $s>0$. Then:
\begin{enumerate}
\item\label{first item true} the natural map $\alpha^k \colon H^k(\mathcal{G}^s_{\mathbb{Z}}(X), \mathbb{Z}) \to H^k(X, \mathbb{Z})$ vanishes for $k>\dim F^1\mathfrak{g}^s$;
\item\label{second item true} either $\mathcal{G}^s_{\mathbb{Z}}(X)$ has nilpotency class at most $2$, or there exists $k$ such that $\alpha^k=0$ and $H^k(\mathcal{G}^s_{\mathbb{Z}}(X), \mathbb{Z}) \neq 0$.
\end{enumerate}
\end{thrm}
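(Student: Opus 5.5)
The plan for item (\ref{first item true}) is to factor $\alpha^k$ through the cohomology of the closure of the higher Albanese image and then kill that cohomology with Hamm's theorem. Since $\Alb^s(X)$ is a $K(\G^s_{\Z}(X),1)$, the map $\tau_{\Alb^s(X)}$ in Proposition~\ref{cohomology prop} is an isomorphism. Hence
\[
\alpha^k=\tau_X\circ(\mu^s_{\Z})^*=(\alb^s)^*\circ\tau_{\Alb^s(X)},
\]
and it suffices to show that $(\alb^s)^*$ vanishes in degrees $k>\dim F^1\g^s$.

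Let $Y=\overline{\alb^s(X)}$. The map $\alb^s$ factors as $X\to Y\hookrightarrow \Alb^s(X)$, so $(\alb^s)^*$ factors through $H^k(Y,\Z)$. We bound the two quantities entering Hamm's theorem as follows.
\begin{enumerate}
\item By Proposition~\ref{closure def prop}, $Y$ is a closed analytic subvariety with $\dim Y=\dim\alb^s(X)$.
\item By Proposition~\ref{horizontality prop}, $\dim Y\le \dim F^1\g^s-\dim F^0\g^s$.
\item By Lemma~\ref{completeness of alb}, $\Alb^s(X)$ is $q$-complete with $q=\dim F^0\g^s$. Restricting the exhausting function to the closed subvariety $Y$ shows that $Y$ is $q$-complete as well.
\end{enumerate}
Theorem~\ref{Hamm thrm} then gives $H^k(Y,\Z)=0$ for $k>\dim Y+q$. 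Since $\dim Y+q\le \dim F^1\g^s$, this proves (\ref{first item true}).

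For item (\ref{second item true}) I would take $k=r:=\rk\G^s_{\Z}(X)=\dim_{\C}\g^s$, the top degree. By Proposition~\ref{cohomological dimension prop}, $H^r(\G^s_{\Z}(X),\Z)=\Z\neq 0$. By (\ref{first item true}), it then suffices to show that $\dim F^1\g^s<\dim\g^s$ whenever the nilpotency class is at least $3$.

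Here I have to be careful with the paper's indexing. Proposition~\ref{horizontality prop} and the remark on the abelian case force $F^0\g^s\subseteq F^1\g^s$ in the paper's normalisation. Translated to Deligne's standard indexing, this means that the paper's $F^1\g^s$ is the standard $F^{-1}$ and the paper's $F^0$ is the standard $F^0$. So the goal becomes: some Hodge number $h^{p,q}$ of $\g^s$ is nonzero with $p\le -2$. This is the step I expect to be the main obstacle, because it requires matching the conventions exactly. The argument I would give has three parts.
\begin{enumerate}
\item The bracket is a morphism of mixed Hodge structures and $W_{-1}\g^s=\g^s$. Hence $[\g^s,\g^s]\subseteq W_{-2}$ and $[\g^s,[\g^s,\g^s]]\subseteq W_{-3}$.
\item The nilpotency class of $\G^s_{\Z}(X)$ equals that of its Maltsev Lie algebra $\g^s$. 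So if the class is at least $3$, the triple brackets are nonzero, and therefore $\Gr^W_{-m}\g^s\neq 0$ for some $m\ge 3$.
\item That graded piece is a pure Hodge structure of weight $-m\le -3$, so its Hodge numbers satisfy $h^{p,q}=h^{q,p}$ with $p+q=-m$. If every nonzero $h^{p,q}$ had $p\ge -1$, symmetry would also give $q\ge -1$, hence $p+q\ge -2$, a contradiction. So some $p\le -2$ occurs, and this piece is not contained in the standard $F^{-1}$.
\end{enumerate}
Consequently $\dim F^1\g^s<\dim\g^s=r$, and item (\ref{first item true}) gives $\alpha^r=0$ while $H^r(\G^s_{\Z}(X),\Z)=\Z$, which proves (\ref{second item true}).

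As a consistency check, the abelian and $2$-step cases only involve weights $-1$ and $-2$, with types $(0,-1),(-1,0),(-1,-1),(0,-2),(-2,0)$. The last of these can occur, but nothing forces it, so no contradiction with examples such as compact tori arises.
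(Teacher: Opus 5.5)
Your proof of item (1) is the paper's argument: factor through $Y=\overline{\alb^s(X)}$, then use Proposition~\ref{closure def prop}, horizontality, restriction of $q$-completeness to $Y$, and Hamm's theorem. You correctly write $\dim Y+q\le\dim F^1\g^s$; the paper has a stray strict inequality at this step.

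For item (2) you take a genuinely different route, and it is correct. Both arguments work in the top degree $r=\rk\G^s_{\Z}(X)$.

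\textbf{The paper's route.} It reduces to showing $\dim\alb^s(X)<r-q$, i.e.\ that $\alb^s$ is not dominant. It then invokes the external result \cite[Theorem B]{Rog}, which gives the implication ``$\alb^s$ dominant $\Rightarrow$ nilpotency class at most $2$''.

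\textbf{Your route.} You prove the purely Hodge-theoretic inequality $\dim F^1\g^s<\dim\g^s$ when the class is at least $3$. In Deligne's indexing this says $F^{-1}\g^s\neq\g^s$. You get it from $[\g^s,[\g^s,\g^s]]\subseteq W_{-3}$ together with Hodge symmetry on a nonzero $\Gr^W_{-m}\g^s$ with $m\ge 3$. You then feed the inequality into item (1).

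This is essentially the alternative the paper mentions only in a footnote. The footnote suggests Deligne's canonical splitting, whereas your symmetry argument on the graded pieces is simpler. One step is left implicit in your write-up: $F^{-1}\g^s=\g^s$ would force $F^{-1}\Gr^W_{-m}\g^s=\Gr^W_{-m}\g^s$. This is immediate, since the induced filtration on a subquotient is the image.

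\textbf{What each approach buys.} Your version is self-contained and gives a stronger intermediate statement: the bound in item (1) already lies strictly below $r$. Combined with Proposition~\ref{horizontality prop}, it also reproves the non-dominance of $\alb^s$ in class $\ge 3$, which is exactly the implication the paper cites from \cite{Rog}. The paper's route is shorter on the page but relies on that earlier geometric result.

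Your reading of the indexing, that the paper's $F^1$ is Deligne's $F^{-1}$ and $F^0$ is the standard $F^0$, is the right one. It is forced by the horizontality statement and is needed for the theorem to be meaningful.
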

\begin{proof}

(\ref{first item true}). Consider the map $\alb^s \colon X \to \Alb^s(X)$.  By Proposition \ref{cohomology prop}, it is enough to show that  $(\alb^s)^* \colon H^{k}(\Alb^s(X), \Z) \to H^{k}(X, \Z)$ vanishes. 

Let $Y:=\overline{\alb^s(X)}$ be the closure of the image in the analytic topology. 

By Proposition \ref{closure def prop} it is a closed holomorphic subvariety of $\dim Y=\dim \alb^s(X)$. 

Let $q:=\dim F^0\g^s$ and $r:=\rk \G^s_{\Z}(X)$. Then 
\[
\dim_{\C} \Alb^s(X)= \dim_{\C} \g^s_{\C}- \dim F^0\g^s= r-q 
\]
and $\Alb^s(X)$ is $q$-complete (Lemma \ref{completeness of alb}). Therefore, $Y$ is also $q$-complete and by Hamm's theorem (Theorem \ref{Hamm thrm}) has homotopy type of a CW-complex of a dimension 
\[
\dim_{\C} Y +q < \dim_{\C}(F^1\g^s/F^0\g^s)+q=\dim F^1\g^s-\dim F^0\g^s+\dim F^0\g^s=\dim F^1\g^s.
\]
In particular, $H^k(Y, \Z)=0$ for $k>\dim F^1\g^s$.

The map $\alb^s$ factors through $Y$, thus $(\alb^s)^* \colon H^k(\Alb^s(X), \Z) \to H^k(X, \Z)$ factors through $H^k(Y, \Z)=0$ and vanishes for $k> \dim F^1\g^s$.
\\

(\ref{second item true}).By Propositions \ref{albr homotopy equiv prop} and \ref{cohomological dimension prop} $H^r(\Alb^s(X), \Z)=\Z$ for $r=\rk \G^s_{\Z}(X)$. It is enough to show that $\dim \alb^s(X)<r-q$. But 
\[
\dim \Alb^s(X)=\dim_{\C} \G^s_{\C}(X)-\dim F^0\G^s=r-q,
\]
therefore $\dim \alb^s(X) \le r-q$ and the equality is obtained if and only if $\alb^s$ is dominant. The latter implies that $\G^s_{\Z}(X)$  is of nilpotency class at most $2$ by \cite[Theorem B]{Rog}\footnote{More generally, one can show that $\g^s \neq F^1 \g^s$ unless it is of nilpotency class at most $2$, using Deligne's canonical splitting of the mixed Hodge structure on $\g^s$.}.
\end{proof}

\begin{cor}\label{aspherical cor}
Let $X$ be a normal complex algebraic variety. Suppose that $\pi_1(X)$ is virtually nilpotent and $X$ is aspherical. Then $\pi_1(X)$ is virtually $2$-step nilpotent.
\end{cor}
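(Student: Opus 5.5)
First I would pass to a finite étale cover. Since $\pi_1(X)$ is virtually nilpotent and finitely generated, it has a torsion-free nilpotent subgroup $\Delta$ of finite index. Let $X'\to X$ be the corresponding finite étale cover. Then $X'$ is again a normal algebraic variety: it is the normalisation of $X$ in a finite extension of its function field, or equivalently an étale cover is algebraic by Riemann existence. It is also aspherical, since its universal cover coincides with that of $X$. Moreover $\pi_1(X')=\Delta$ is torsion-free nilpotent, say of class $c$. So it suffices to show $c\le 2$. Assume to the contrary that $c>2$.

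Next I would choose $s\ge c$. Then $\Delta_s=\{e\}$, so $\Delta^s=\Delta$. As $\Delta$ is torsion-free, $\G^s_{\Z}(X')\simeq\Delta$ by Proposition \ref{completion basic prop}, and the map $\mu^s_{\Z}\colon\pi_1(X')\to\G^s_{\Z}(X')$ is an isomorphism. Since $X'$ is aspherical, the map $\tau_{X'}\colon H^\bullet(\pi_1(X'),\Z)\to H^\bullet(X',\Z)$ is also an isomorphism; here one uses that $X'^{\an}$ has the homotopy type of a CW complex, which holds for complex algebraic varieties. Hence the map $\alpha^k$ in Theorem \ref{main thrm} is an isomorphism for every $k$.

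Now I would apply item (\ref{second item true}) of Theorem \ref{main thrm} to $X'$ and $s$. The group $\G^s_{\Z}(X')\simeq\Delta$ has nilpotency class $c>2$. So there is some $k$ with $\alpha^k=0$ and $H^k(\G^s_{\Z}(X'),\Z)\ne0$, which contradicts injectivity of $\alpha^k$. Alternatively, one can take $k=r=\rk\Delta$: Proposition \ref{cohomological dimension prop} gives $H^r(\Delta,\Z)=\Z$, and the proof of item (\ref{second item true}) shows $\alpha^r=0$. Therefore $c\le2$, and $\pi_1(X)$ is virtually $2$-step nilpotent.

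The main point needing care is the reduction step: checking that the finite cover is still a normal algebraic variety, so that Theorem \ref{main thrm} applies, and that asphericity passes to it. Both are standard. The remaining steps follow directly from results already established in the paper.
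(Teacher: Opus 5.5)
Your proposal is correct and matches the paper's proof: pass to a finite étale cover $X'$ with torsion-free nilpotent fundamental group, use asphericity and $\G^s_{\Z}(X')\simeq\pi_1(X')$ to see that every $\alpha^k$ is an isomorphism, and derive a contradiction with item (\ref{second item true}) of Theorem \ref{main thrm} if the nilpotency class exceeds $2$. The details you add are only elaborations of steps the paper leaves implicit: that $X'$ is again a normal algebraic variety, that you may take any $s\ge c$ instead of $s=c$, and that one can use $k=\rk\Delta$ explicitly.
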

\begin{proof}
Let $\Gamma \subseteq \pi_1(X)$ be a finite index nilpotent torsion - free subgroup. Let $X' \to X$ be a finite \'etale cover such that $\pi_1(X') \simeq \Gamma$. Observe that $X'$ is also aspherical. Let $s:=\nilp(\Gamma)$. We need to show that $s \le 2$. 

Then  $\pi_1(X') \simeq \G^s_{\Z}(X')$ and 
\[
H^{\bullet}(\G^s_{\Z}(X'), \Z) \xrightarrow{\sim} H^{\bullet}(\pi_1(X'), \Z) \xrightarrow{\sim} H^{\bullet}(X', \Z)
\]
are isomorphisms. If $s > 2$, we come to a contradiction with Theorem \ref{main thrm}. 
\end{proof}

The following Corollary also strengthens earlier results \cite[Theorem 5.43]{HZ} (in the case where $\pi_1(X)$ is not rationally nilpotent and $s >3$) and \cite{Shim2} (in the case where $\dim \G^s_{\C} \le 8$).

\begin{cor}
Let $X$ be a normal complex algebraic variety. Suppose that $\pi_1(X)$ is not rationally abelian or rationally $2$-step nilpotent. Then $\Alb^s(X)$ does not have a homotopy type of a normal algebraic variety for $s>2$.
\end{cor}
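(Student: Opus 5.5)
We argue by contradiction. Suppose $X$ is a normal algebraic variety homotopy equivalent to $\Alb^s(X')$, where $X'$ is the variety from the statement and $s>2$. Then $X$ is aspherical, because $\Alb^s(X')$ is a $K(\G^s_{\Z}(X'),1)$. Its fundamental group is $\Delta:=\G^s_{\Z}(X')$, which is torsion free and nilpotent.

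The first step is to compute the nilpotency class of $\Delta$. The hypothesis that $\pi_1(X')$ is neither rationally abelian nor rationally $2$-step nilpotent means that the Maltsev tower (\ref{Maltsev tower}) of $\pi_1(X')$ does not stabilise at step $1$ or $2$. Consequently $\G^3_{\Q}(X')\to\G^2_{\Q}(X')$ has nontrivial kernel $\mathcal{Z}^3$. The central series of $\Delta$ maps onto that of $\G^3_{\Z}(X')$, so for $s\ge 3$ the group $\Delta$ has nilpotency class at least $3$. In particular $\Delta$ is not virtually $2$-step nilpotent. Indeed, a finite index subgroup of a torsion free nilpotent group has the same Maltsev completion, hence the same nilpotency class.

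The second step applies Corollary~\ref{aspherical cor} to $X$. Since $X$ is aspherical and normal with nilpotent fundamental group $\Delta$, the corollary says $\Delta$ is virtually $2$-step nilpotent. This contradicts the first step.

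Alternatively, we can apply Theorem~\ref{main thrm}(\ref{second item true}) directly to $X$ with $s'=\nilp(\Delta)$. Then $\G^{s'}_{\Z}(X)=\Delta$, and because $X$ is aspherical, $\alpha^\bullet$ is an isomorphism. Since $\Delta$ has class $>2$, the theorem gives some $k$ with $\alpha^k=0$ but $H^k(\Delta,\Z)\neq 0$, which is impossible.

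The only step needing care is the first one: checking that $\G^s_{\Z}(X')$, as an abstract group, has class exactly $\min(s,\text{stabilisation step})\ge 3$. We argue this on Lie algebras. The Lie algebra $\g^s_{\Q}$ is the $s$-th lower central quotient of the Maltsev Lie algebra. Its third lower central term maps onto $\z^3\neq 0$, so it is nonzero, and the Lie algebra is not $2$-step nilpotent.
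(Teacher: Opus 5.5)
Your proof is correct and follows the route the paper intends. The paper states this corollary without proof, as a direct consequence of Corollary~\ref{aspherical cor}: a normal variety homotopy equivalent to the $K(\G^s_{\Z}(X),1)$-space $\Alb^s(X)$ would be aspherical with torsion-free nilpotent fundamental group of class at least $3$ (since $\G^s_{\Z}(X)\twoheadrightarrow\G^3_{\Z}(X)$ and $\mathcal{Z}^3\neq 0$), which Corollary~\ref{aspherical cor}, or directly Theorem~\ref{main thrm}(\ref{second item true}), rules out. You pass quickly over why failure of the rational tower to stabilise at step $2$ forces $\mathcal{Z}^3\neq 0$; this is the standard fact that $\Gamma_j/\Gamma_{j+1}$ is a quotient of $(\Gamma_{j-1}/\Gamma_j)\otimes\Gamma^1$ via commutators, so once one layer is finite, all later layers are finite too.
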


\bibliography{references.bib}{}

@BOOK{ABCKT,
  title     = "Fundamental {G}roups of {C}ompact {K}{\"a}hler Manifolds",
  author    = "Amoros, Jaume and Burger, Marc and Corlette, Kevin and Kotschick, Dieter
               and Toledo, Domingo",

  publisher = "American Mathematical Society",
  series    = "Mathematical Surveys and Monographs",
  month     =  mar,
  year      =  1996,
  address   = "Providence, RI"
}

@article{AC,
  title = {The nilpotent quotients of normal quasi-projective varieties with proper quasi-{A}lbanese map},
  volume = {21},
  ISSN = {1558-8602},
  url = {http://dx.doi.org/10.4310/PAMQ.250115040135},
  DOI = {10.4310/pamq.250115040135},
  number = {3},
  journal = {Pure and Applied Mathematics Quarterly},
  publisher = {International Press of Boston},
  author = {Aguilar Aguilar,  Rodolfo and Campana,  Frédéric},
  year = {2025},
  pages = {911–929}
}

@article{AG,
  title = {Théorèmes de finitude pour la cohomologie des espaces complexes},
  volume = {79},
  ISSN = {2102-622X},
  url = {http://dx.doi.org/10.24033/bsmf.1581},
  DOI = {10.24033/bsmf.1581},
  journal = {Bulletin de la Société Mathématique de France},
  publisher = {Societe Mathematique de France},
  author = {Andreotti,  Aldo and Grauert,  Hans},
  year = {1962},
  pages = {193–259}
}

@article{BBT,
  title={o-minimal {G}{A}{G}{A} and a conjecture of {G}riffiths},
  author={Bakker, Benjamin and Brunebarbe, Yohan and Tsimerman, Jacob},
  journal={Inventiones mathematicae},
  volume={232},
  number={1},
  pages={163--228},
  year={2023},
  publisher={Springer}
}

@article{Camp,
  title = {Remarques sur les groupes de {K}\"{a}hler nilpotents},
  volume = {28},
  ISSN = {1873-2151},
  url = {http://dx.doi.org/10.24033/asens.1715},
  DOI = {10.24033/asens.1715},
  number = {3},
  journal = {Annales scientifiques de l’École normale supérieure},
  publisher = {Societe Mathematique de France},
  author = {Campana,  F.},
  year = {1995},
  pages = {307–316}
}

@article{CDHP,
  title   = {Two-step nilpotent monodromy of local systems on special varieties},
  author  = {Cao, Junyan and Deng, Ya and Hacon, Christopher D. and Paun, Mihai},
  journal = {arXiv preprint arXiv:2603.14539},
  year    = {2026}
}

@article{CDY,
  title   = {Hyperbolicity and fundamental groups of complex quasi-projective varieties {(III)}: applications},
  author  = {Cadorel, Benoit and Deng, Ya and Yamanoi, Katsutoshi},
  journal = {arXiv preprint arXiv:2512.20360},
  year    = {2025}
}

@article{Coll,
  title = {The fundamental group of the open symmetric product of a hyperelliptic curve},
  volume = {178},
  ISSN = {1572-9168},
  url = {http://dx.doi.org/10.1007/s10711-014-9998-7},
  DOI = {10.1007/s10711-014-9998-7},
  number = {1},
  journal = {Geometriae Dedicata},
  publisher = {Springer Science and Business Media LLC},
  author = {Collino,  Alberto},
  year = {2014},
  month = Aug,
  pages = {15–19}
}

@article{DGMS,
  author  = {Deligne, Pierre and Griffiths, Phillip and Morgan, John and Sullivan, Dennis},
  title   = {Real homotopy theory of {K}\"ahler manifolds},
  journal = {Inventiones Mathematicae},
  volume  = {29},
  number  = {3},
  pages   = {245--274},
  year    = {1975},
  doi     = {10.1007/BF01389853}
}

@article{GM,
  title   = {An explicit class of {L}agrangian surfaces},
  author  = {Grossi, Paolo and Moretti, Federico},
  journal = {arXiv preprint arXiv:2502.13087},
  year    = {2025}
}

@inproceedings{Hain,
  title={The geometry of the mixed {H}odge structure on the fundamental group},
  author={Hain, Richard},
  booktitle={Proc. Symp. Pure Math},
  volume={46},
  number={2},
  pages={247--282},
  year={1987}
}

@article{Hamm,
author = {Hamm, Helmut A.},
journal = {Journal für die reine und angewandte Mathematik},
pages = {1-9},
title = {Zum {H}omotopietyp q-vollständiger {R}äume},
url = {http://eudml.org/doc/152787},
volume = {364},
year = {1986},
}

@article{Has,
  title = {Minimal models of nilmanifolds},
  volume = {106},
  ISSN = {0002-9939},
  url = {http://dx.doi.org/10.1090/S0002-9939-1989-0946638-X},
  DOI = {10.1090/s0002-9939-1989-0946638-x},
  number = {1},
  journal = {Proceedings of the American Mathematical Society},
  publisher = {American Mathematical Society (AMS)},
  author = {Hasegawa,  Keizo},
  year = {1989},
  month = May,
  pages = {65–71}
}

@article{HZ,
  title={Unipotent variations of mixed {H}odge structure},
  author={Hain, Richard  and Zucker, Steven},
  journal={Inventiones mathematicae},
  volume={88},
  number={1},
  pages={83--124},
  year={1987},
  publisher={Springer-Verlag Berlin/Heidelberg}
}

@article{Mal,
  title={On a class of homogeneous spaces},
  author={Mal'tsev, Anatolii Ivanovich},
  journal={Izvestiya Rossiiskoi Akademii Nauk. Seriya Matematicheskaya},
  volume={13},
  number={1},
  pages={9--32},
  year={1949},
  publisher={Russian Academy of Sciences, Steklov Mathematical Institute of Russian~…}
}

@article{Morg,
  title={The algebraic topology of smooth algebraic varieties},
  author={Morgan, John W},
  journal={Publications Math{\'e}matiques de l'IH{\'E}S},
  volume={48},
  pages={137--204},
  year={1978}
}

@article{Merk,
  title={Grothendieck-{T}eichm{\"u}ller group, operads and graph complexes: a survey},
  author={Merkulov, Sergei},
  journal={arXiv preprint arXiv:1904.13097},
  year={2019}
}

@inbook{PS,
  title = {Complex analytic geometry in a nonstandard setting},
  ISBN = {9780511735226},
  url = {http://dx.doi.org/10.1017/CBO9780511735226.008},
  DOI = {10.1017/cbo9780511735226.008},
  booktitle = {Model Theory with Applications to Algebra and Analysis},
  publisher = {Cambridge University Press},
  author = {Peterzil,  Ya’acov and Starchenko,  Sergei},
  year = {2008},
  month = May,
  pages = {117–166}
}

@article{Quil,
  title={Rational homotopy theory},
  author={Quillen, Daniel},
  journal={Annals of Mathematics},
  volume={90},
  number={2},
  pages={205--295},
  year={1969},
  publisher={JSTOR}
}

@article{Rog,
  title   = {O-minimal geometry of higher {A}lbanese manifolds},
  author  = {Rogov, Vasily},
  journal = {arXiv preprint arXiv:2505.07632},
  year    = {2025}
}

@article{Shim1,
  title   = {On the torsion-free nilpotent fundamental groups of smooth quasi-projective varieties of rank up to seven},
  author  = {Shimoji, Taito},
  journal = {arXiv preprint arXiv:2510.09026},
  year    = {2026}
}

@article{Shim2,
  title   = {Quasi-projective nilmanifolds},
  author  = {Shimoji, Taito},
  journal = {arXiv preprint arXiv:2601.16433},
  year    = {2026}
}

@article{Simp,
  title={Higgs bundles and local systems},
  author={Simpson, Carlos T},
  journal={Publications Math{\'e}matiques de l'IH{\'E}S},
  volume={75},
  pages={5--95},
  year={1992}
}

@article{Skoda,
author = {Skoda, H.},
journal = {Inventiones mathematicae},
pages = {97-108},
title = {Fibrés holomorphiques à base et à fibre de {S}tein},
url = {http://eudml.org/doc/142510},
volume = {43},
year = {1977},
}

@article{Suc,
  title = {Formality and finiteness in rational homotopy theory},
  volume = {10},
  ISSN = {2308-216X},
  url = {http://dx.doi.org/10.4171/EMSS/74},
  DOI = {10.4171/emss/74},
  number = {2},
  journal = {EMS Surveys in Mathematical Sciences},
  publisher = {European Mathematical Society - EMS - Publishing House GmbH},
  author = {Suciu,  Alexander I.},
  year = {2023},
  month = Nov,
  pages = {321–403}
}

@article{SvdV,
  title = {Homotopy groups of pullbacks of varieties},
  volume = {102},
  ISSN = {2152-6842},
  url = {http://dx.doi.org/10.1017/S002776300000043X},
  DOI = {10.1017/s002776300000043x},
  journal = {Nagoya Mathematical Journal},
  publisher = {Cambridge University Press (CUP)},
  author = {Sommese,  Andrew John and Ven,  A. van de},
  year = {1986},
  month = June,
  pages = {79–90}
}

@article{Tak,
  author  = {Takeuchi, Shigeru},
  title   = {On completeness of holomorphic principal bundles},
  journal = {Nagoya Mathematical Journal},
  volume  = {57},
  pages   = {121--138},
  year    = {1974},
  doi     = {10.1017/S0027763000015921}
}

@book{VdD,
  title={Tame topology and o-minimal structures},
  author={Van den Dries, Lou},
  volume={248},
  year={1998},
  publisher={Cambridge university press}
}
\bibliographystyle{alpha}

\end{document}